\let\mathbb=\varmathbb
\colorlet{MyBlue}{DodgerBlue!75!Black}
\colorlet{MyGreen}{DarkGreen!95!Black}
\numberwithin{equation}{section}  
\crefname{example}{Ex.}{Exs.}
\newcommand{\eps}{\varepsilon}
\DeclareMathOperator*{\argmin}{argmin}
\DeclareMathOperator{\cl}{cl}
\DeclareMathOperator{\zer}{zer}
\DeclareMathOperator{\indicator}{\iota}
\DeclareMathOperator{\dist}{dist}
\DeclareMathOperator{\dom}{dom}
\DeclareMathOperator{\gr}{graph}
\DeclareMathOperator{\supp}{\mathtt{s}}
\DeclareMathOperator{\Var}{\mathsf{Var}}
\DeclareMathOperator{\Id}{Id}
\DeclareMathOperator{\prox}{\textsf{prox}}
\newcommand{\bA}{{\mathbf A}}
\newcommand{\bB}{{\mathbf B}}
\newcommand{\bK}{\mathbf{K}}
\renewcommand{\iff}{\Leftrightarrow}
\renewcommand{\implies}{\Rightarrow}
\renewcommand{\emptyset}{\varnothing}
\newcommand{\eqdef}{\triangleq}
\newcommand{\wlim}{\rightharpoonup}
\newcommand{\scrA}{\mathcal{A}}
\newcommand{\scrB}{\mathcal{B}}
\newcommand{\scrC}{\mathcal{C}}
\newcommand{\scrD}{\mathcal{D}}
\newcommand{\scrF}{\mathcal{F}}
\newcommand{\scrK}{\mathcal{K}}
\newcommand{\scrL}{\mathcal{L}}
\newcommand{\scrN}{\mathcal{N}}
\newcommand{\scrS}{\mathcal{S}}
\newcommand{\scrU}{\mathcal{U}}
\newcommand{\scrX}{\mathcal{X}}
\newcommand{\scrY}{\mathcal{Y}}
\newcommand{\scrZ}{\mathcal{Z}}
\newcommand{\R}{\mathbb{R}}
\newcommand{\N}{\mathbb{N}}
\DeclareMathOperator{\NC}{\mathsf{NC}}
\DeclareMathOperator{\TC}{\mathsf{TC}}
\newcommand{\ball}{\mathbb{B}}
\newcommand{\opA}{\mathsf{A}}
\newcommand{\opM}{\mathsf{M}}
\newcommand{\opB}{\mathsf{B}}
\newcommand{\opF}{{\mathsf{F}}}
\newcommand{\opV}{{\mathsf{V}}}
\DeclareMathOperator{\VI}{VI}
\DeclareMathOperator{\HVI}{HVI}
\DeclareMathOperator{\gap}{\mathsf{Gap}}
\theoremstyle{plain}
\newtheorem{theorem}{Theorem}
\newtheorem{corollary}[theorem]{Corollary}
\newtheorem*{corollary*}{Corollary}
\newtheorem{lemma}[theorem]{Lemma}
\newtheorem{proposition}[theorem]{Proposition}
\theoremstyle{definition}
\newtheorem{definition}[theorem]{Definition}
\newtheorem*{definition*}{Definition}
\newtheorem*{problem*}{Problem}
\newtheorem{assumption}{Assumption}
\newcommand{\close}{\hfill{\footnotesize$\Diamond$}}
\theoremstyle{remark}
\newtheorem{remark}{Remark}
\newtheorem*{remark*}{Remark}
\newtheorem*{notation*}{Notational remark}
\numberwithin{theorem}{section}
\numberwithin{remark}{section}
\numberwithin{example}{section}
\DeclarePairedDelimiter{\inner}{\langle}{\rangle}
\title{Extragradient methods with complexity guarantees for hierarchical variational inequalities}
\date{\today}
\author[1]{\small Pavel Dvurechensky}
\author[2]{\small Meggie Marschner} 
\author[3]{\small Shimrit Shtern}
\author[2]{\small Mathias Staudigl}
\affil[1]{\footnotesize Weierstrass Institute for Applied Analysis and Stochastics, Mohrenstr. 39, 10117 Berlin, Germany\\
(\href{mailto:Pavel.Dvurechensky@wias-berlin.de}{Pavel.Dvurechensky@wias-berlin.de})}
\affil[2]{\footnotesize Mannheim University, Department of Mathematics, B6 26, 68159 Mannheim, Germany\\
(\href{mailto:mathias.staudigl@uni-mannheim.de}{m.marschner@uni-mannheim.de,m.staudigl@uni-mannheim.de})}
\affil[3]{\footnotesize Faculty of Data and Decision Sciences, Technion - Israel Institute of Technology, Haifa, Israel\\
(\href{mailto:shimrits@technion.ac.il}{shimrits@technion.ac.il})}
\begin{document}

\maketitle

\begin{abstract}%
In the framework of a real Hilbert space we consider the problem of approaching solutions to a class of hierarchical variational inequality problems, subsuming several other problem classes including certain mathematical programs under equilibrium constraints, constrained min-max problems, hierarchical game problems, optimal control under VI constraints, and simple bilevel optimization problems. For this general problem formulation, we establish rates of convergence in terms of suitably constructed gap functions, measuring feasibility gaps and optimality gaps. We present worst-case iteration complexity results on both levels of the variational problem, as well as weak convergence under a geometric weak sharpness condition on the lower level solution set. Our results match and improve the state of the art in terms of their iteration complexity and the generality of the problem formulation.

\end{abstract}

\section{Introduction}
\label{sec:Intro}
%

Let $(\scrZ,\inner{\cdot,\cdot})$ be a real separable Hilbert space. We consider the problem of solving the hierarchical equilibrium problem, in which a hemi-variational inequality ($\HVI$), also known as variational inequality of the second kind, is solved over the solution set of another $\HVI$:
\begin{equation}\label{eq:P}\tag{P}
\text{ Find }z^{\ast}\in\scrS_{2} \text{ s.t. } \inner{\opF_{1}(z^{\ast}),z-z^{\ast}}+g_{1}(z)-g_{1}(z^{\ast})\geq 0 \quad\forall z\in \scrS_{2}\eqdef \zer(\opF_{2}+\partial g_{2}), 
\end{equation}
where $\opF_1,\opF_2:\scrZ\to\scrZ$ are monotone operators and $g_1, g_2$ are proper, convex, and lower-semicontinuous functions. We refer to the set $\scrS_{2}$ (assumed to be non-empty) as the solution set of the lower-level problem. In the same vein, we will call $\scrS_{1}$ the set of solutions of \eqref{eq:P}. Obviously, these sets are nested in the sense that $\scrS_{1}\subseteq\scrS_{2}$. Hierarchical $\HVI$'s as in \eqref{eq:P} are prominent in mechanics \cite{goeleven2013variational} and optimal control \cite{Barbu93,de2011optimal}. In finite dimensions, it has become a useful model template to study a plethora of problems in mathematical programming, machine learning, control, game theory, and signal processing \cite{malitsky2020golden,InertialFBF,Facchinei:2014aa}. Some motivating examples are given below.  

\subsection{Motivating Examples}
\subsubsection{Simple bilevel optimization}
\label{ex:SBP}
    Let $f_{1},f_{2}:\R^{n}\to\R$ be convex and continuously differentiable functions and $\opF_{1}=\nabla f_{1}$, $\opF_{2}=\nabla f_{2}$. Then we recover from \eqref{eq:P} the simple bilevel problem \cite{Dempe:2021aa,Solodov:2007aa}  as the hierarchical optimization problem 
    \begin{equation*}
    \min_{x} f_{1}(x)+g_{1}(x),\quad \text{s.t.: } x\in \scrS_{2}=\argmin_{x\in\R^{n}}\{f_{2}(x)+g_{2}(x)\} . 
    \end{equation*}
This model has been studied in several very recent papers, e.g., in \cite{merchav2023convex}, which approach this problem via a disciplined proximal splitting ansatz, as well as in \cite{AminiYous19,doron2023methodology,yousefian2021bilevel,merchav2024fast,boct2025accelerating}. 

\subsubsection{Hierarchical variational inequalities}
If $g_{1}=0$ and $g_{2}$ is the convex indicator of a non-empty, closed convex set $\scrK\subseteq\scrZ$, problem \eqref{eq:P} reduces to 
    \begin{equation*}
\text{Find }z^{\ast}\in\scrS_{2}\quad \text{s.t.: } \inner{\opF_{1}(z^{\ast}),z-z^{\ast}}\geq 0 \qquad \forall z\in\scrS_{2}=\zer(\opF_{2}+\NC_{\scrK}).
\end{equation*}
This is a challenging formulation of nested variational inequalities, which has been recently investigated in \cite{samadi_improved_2025,marschner2025tikhonov,alves2025inertial}.

\subsubsection{Hierarchical jointly convex generalized Nash equilibrium problem}
\label{sec:HierarchyNE}
A very recent work \cite{LamSagSIOPT25} studied a projection method for solving a challenging class of jointly convex generalized Nash equilibrium problems (GNEPs) featuring hierarchy and non-smooth data (see \cite{Cui:2025aa} for a stochastic formulation of a similar problem).
\paragraph{The lower-level Nash Equilibrium Problem (NEP)}
Let $\nu \in \{1,\ldots,N\}$ represent the labels of the lower-level players, who engage in a Nash game in which each player's parameterized optimization problem is given by 
    \begin{equation*}
\min_{y^{\nu}\in\scrY^{\nu}}\{h^{\ell}_{\nu}(y^{\nu},y^{-\nu}))+\varphi^{\ell}_{\nu}(y^{\nu})\}.  
\end{equation*}
The data of this game problem enjoys the following assumptions:
\begin{enumerate}
\item[(a)]  Player $\nu$'s feasible set $\scrY^{\nu}$ is a real separable Hilbert space; 
\item[(b)]  $h_{\nu}^{\ell}:\scrY^{\nu}\to\R$ is Fr\'{e}chet differentiable and convex with respect to $y^{\nu}$; 
\item[(c)] The game's pseudogradient $\opF_{2}(y)\eqdef (\nabla_{y^{1}}h^{\ell}_{1}(y), \ldots, \nabla_{y^{N}}h^{\ell}_{N}(y))$ is monotone on the separable Hilbert space  $\scrY\eqdef\prod_{\nu=1}^{N}\scrY^{\nu}$;
\item[(d)]  The functions $\varphi_{\nu}^{\ell}(\cdot)$ are proper, closed, and convex, with compact domain.
\end{enumerate}
We can characterize equilibria of the Nash game $\Gamma_{\rm low}\eqdef \{h^{\ell}_{\nu},\varphi_{\nu}^{\ell}\}_{1\leq\nu\leq N}$ as solutions to the $\HVI$ with data $\opF_{2}$, $
g_{2}(y^{1},\ldots,y^{N})\eqdef \sum_{\nu=1}^{N}\varphi^{\ell}_{\nu}(y^{\nu})
$ (see e.g. \cite{FacPan03}). Hence, $\scrS_{2}=\zer(\opF_{2}+\partial g_{2})$. The above assumptions ensure that $\dom(g_{2})$ is a compact subset of $\scrY$. Furthermore, $\scrS_{2}$ is nonempty, convex, and compact. 
\paragraph{The upper level GNEP}
The upper-level problem is the Nash game with joint coupling constraint represented by the equilibrium set $\scrS_{2}$. Let $\mu \in \{1,\ldots,M\}$ denote the labels of upper-level players. Each upper-level player aims to solve the constrained optimization problem 
\begin{equation*}
\min_{x^{\mu}\in\scrX^{\mu}}\{h^{u}_{\mu}(x^{\mu},x^{-\mu})+\varphi^{u}_{\mu}(x^{\mu})\}\quad \text{s.t.: } (x^{\mu},x^{-\mu})\in\scrS_{2}.
\end{equation*}
As for the lower-level NEP, we make the following assumptions on the problem data:
\begin{enumerate}
\item[(e)] $h^{u}_{\mu}:\scrX^{\mu}\to\R$ is Fr\'{e}chet differentiable and convex with respect to $x^{\mu}$; 
\item[(f)] The operator $\opF_{1}(x)\eqdef ( \nabla_{x^{1}}h^{u}_{1}(x), \ldots,  \nabla_{x^{N}}h^{u}_{N}(x))$ is monotone on the product space $\scrX\eqdef\prod_{\nu=1}^{N}\scrX^{\nu}$; 
\item[(g)] $\varphi^{u}_{\mu}$ is proper closed convex with closed convex domain $\dom(\varphi^{u}_{\mu})$.
\end{enumerate}
The decision variables $x^{\mu}$ correspond to blocks $(y^{\nu})_{\nu\in\scrN_{\mu}}$, in which $\scrN_{\mu}\subseteq\{1,\ldots,N\}$ represent the lower-level players controlled by upper-level player $\mu$. In particular, $\scrX^{\mu}=\prod_{\nu\in\scrN_{\mu}}\scrY^{\nu}$. The entire hierarchical equilibrium problem thus belongs to our template \eqref{eq:P}, with $\scrS_{1}=\zer(\opF_{1}+\partial g_{1}+\NC_{\scrS_{2}})$, where 
$
g_{1}(x^{1},\ldots,x^{M})\eqdef\sum_{\mu=1}^{M}\varphi^{u}_{\mu}(x^{\mu}).
$
Note that if $M=1$ (i.e., there is only one player on the upper level), then we recover the problem of equilibrium selection in Nash games, which is an important problem in game theory \cite{yousefian2021bilevel,Benenati:2023aa}.

\subsection{Related works}
This paper focuses on a subclass of bilevel problems, usually termed as simple bilevel problems \cite{Dempe:2021aa}, in which a single decision variable is to be designed in order to solve the upper-level problem over the solution set of a lower-level problem. Starting with the seminal contributions \cite{Solodov:2007aa,Cabot2005}, Tikhonov regularization has become one of the dominant paradigms in the numerical solution of simple bilevel optimization problems.  In our $\HVI$ context, this idea leads to the definition of the data that combines both levels in \eqref{eq:P} as
\begin{equation}
\label{eq:VkGk_def}
    \opV_{k}\eqdef\opF_{2}+\sigma_{k}\opF_{1},\quad G_{k}\eqdef g_{2}+\sigma_{k}g_{1},
\end{equation}
where $k$ is the iteration counter and $(\sigma_{k})_{k\geq1}$ is a well-chosen regularization sequence. Over the past years, quite a significant list of publications appeared, proving rates of convergence with respect to the inner (lower-level) and the outer (upper-level) problem in the setting of hierarchical optimization as in Section \ref{ex:SBP}. The first results in this direction are \cite{beck2014first,SabSht17} for strongly convex upper level functions. \cite{merchav2023convex} develop an alternating proximal subgradient method for convex composite models in the upper and lower level problem. Building on the iterative regularization structure, adaptive versions of a related proximal-gradient strategy are studied in \cite{latafat2025convergence} without rates on the upper level problem. \cite{shtern2025convergence} and \cite{boct2025accelerating} present rates for both upper and lower level problems for the iterative proximal gradient method and its accelerated version, which was first analyzed in \cite{merchav2024fast}. Beyond proximal-gradient based methods, projection-free schemes are developed in \cite{doron2023methodology,giang2024projection}. Moving beyond optimization, there exists a significant body of literature, rooted in the numerical resolution of inverse problems and signal processing, that studied solution methods for selecting a solution to a variational inequality given a pre-defined selection criterion \cite{ogura2002non,xu2003convergence,marino2011explicit,ono2014hierarchical}. This is known as the equilibrium selection problem, a special case of the general family of mathematical programs under equilibrium constraints 
\cite{luo1996mathematical,Hintermuller:2014aa}. All these papers focus on asymptotic convergence of the sequence generated by tailor-made numerical algorithms.  The only results on iteration complexity we are aware of are published in the papers \cite{kaushik2023incremental,Kaushik:2021aa,jalilzadeh2024stochastic}. Moving beyond the function-case in the upper-level (equilibrium selection), there exist a few papers on numerical methods for solving hierarchical VIs \cite{van2021regularization,ThongNumerical20,lampariello2022solution}. Among those, only \cite{lampariello2022solution} contains complexity statements.\\
Recently, the papers \cite{samadi_improved_2025} and \cite{alves2025inertial} have addressed the finite-dimensional variational inequality case, obtained from our model template \eqref{eq:P} by imposing $g_{1}=0$ and $g_{2}$ to be an indicator function over a compact convex set. Both papers derive complexity guarantees for the Korpelevich extragradient method, applied to the Tikhonov regularized variational inequality with the operator $\opF_{2}+\sigma_{k}\opF_{1}$ at iteration $k$. The obtained rates are $O(1/k^{\delta})$ (lower level) and $O(1/k^{1-\delta})$ (upper level), where $\delta \in (0,1)$ is a parameter defining the strength of the Tikhonov regularization.

\subsection{Contributions}
The aim of this paper is to settle the complexity issue for a very general family of nested equilibrium problems formulated in the model framework \eqref{eq:P}, and thereby extend all the existing results on hierarchical monotone variational inequalities. Our main contributions are as follows:

\textbf{More general problem formulation:} Unlike \cite{samadi_improved_2025,alves2025inertial}, our hierarchical equilibrium model \eqref{eq:P} contains general convex composite terms $g_1,g_2$. Specifically, these papers treat the special case $g_{1}=0$ and $g_{2}=\indicator_{\scrC}$, the indicator function of a \textit{compact} convex and nonempty set $\scrC$ in a finite-dimensional Euclidean vector space. Our analysis is performed in potentially infinite-dimensional real Hilbert spaces, and considers general $\HVI$'s. This class contains a rather large set of variational models, which have found significant application in optimal control and mechanics \cite{Facchinei:2014aa,Barbu93,goeleven2013variational}. Besides the gain in modeling, adding these convex and potentially non-smooth functions is an important extension, as it allows treating the non-smooth composite convex model when specialized to the potential case, as studied in \cite{merchav2024fast}. 

\textbf{Enhanced Algorithmic design and combined rate statements:} The recent papers \cite{samadi_improved_2025,alves2025inertial} approach the numerical resolution of problem \eqref{eq:P} via double-call version of the extragradient method. Our approach instead 
builds on
the optimistic extragradient method \cite{hsieh2019convergence}, requiring only one operator evaluation per iteration.
 Within this algorithmic setting, we derive sublinear convergence rates for the lower- and upper-level problem and improved rates under the strong monotonicity of $\opF_1$. Specifically, using a polynomial regularization sequence $\sigma_{k}=O(k^{-\delta})$ for $\delta\in(0,1)$, we demonstrate upper complexity bounds on the order of $O(k^{-\delta})$ for the gap function associated with the lower-level equilibrium problem (feasibility gap), and a $O(k^{-(1-\delta)})$ complexity bound for the gap function of the entire hierarchical problem (optimality gap). These bounds match the existing bounds reported in \cite{samadi_improved_2025,alves2025inertial}, while applying to a much broader class of equilibrium problems and requiring fewer function evaluations.

\textbf{Unbounded domains:} An important assumption in the complexity analysis in \cite{samadi_improved_2025,alves2025inertial} is compactness of the domains over which the variational inequalities are solved. We abandon this assumption. Instead, our proof builds on ideas originating from the asymptotic analysis of non-autonomous evolution equations due to \cite{attouch2018asymptotic}. In the potential case, these ideas have already been successfully applied to the study of hierarchical optimization problems \cite{Bot:2014aa,Cabot2005}. In particular, \cite{boct2025accelerating} make the role of this technique very transparent by emphasizing the importance of the geometry of the lower level problem in establishing a unified complexity statement at both levels simultaneously. A methodological contribution of this work is to show how these ideas naturally translate to the variational setting. We believe that our proof technique is going to be useful for other hierarchical equilibrium problems as well. 

The outline of this paper is as follows. Section \ref{sec:VI} collects relevant facts on VIs, defining the geometric properties we assume for the solution set of the lower-level problem, and introducing the merit functions we employ in our complexity statements. Section \ref{sec:OMD} contains the main algorithmic paradigm we employ to prove our rates. Section \ref{sec:AllProofs} details the proof of our two main theorems. Section \ref{sec:extensions} reports important extensions. We establish improved rates under strong monotonicity of the upper-level $\HVI$ and show that the main statements extend to optimistic versions of the forward-backward-forward splitting, enjoying only one proximal operator evaluation per iteration. Section 6 presents numerical experiments demonstrating our results.

\section{Facts on variational inequalities}
\label{sec:VI}
%
\subsection{Preliminaries}
We work in the real Hilbert space $\scrZ$ with scalar product $\inner{\cdot,\cdot}$ and induced norm $\norm{\cdot}$. For $x \in \scrZ$ and $r>0$, $\ball(x,r)$ denotes the ball of radius $r$ around $x$ defined by the norm in $\scrZ$. The domain of a function $G:\scrZ\to\R\cup\{+\infty\}\eqdef(-\infty,\infty]$ is defined as $\scrD_G\eqdef\dom(G)\eqdef\{z\in\scrZ\vert G(z)<+\infty\}$. The class of proper, convex, and lower semi-continuous functions $g:\scrZ\to(-\infty,\infty]$ is denoted by $\Gamma_{0}(\scrZ)$. The proximal operator of $G$ is defined as $\prox_{G}(u)\eqdef\argmin_{z\in\scrZ}\{G(z)+\frac{1}{2}\norm{z-u}^{2}\}.$ If $x,u\in\scrZ$, we have
\begin{equation}\label{eq:prox}
x=\prox_{G}(u)\iff \inner{u-x,z-x}\leq G(z)-G(x)\qquad \forall z\in\scrZ.
\end{equation}
For a closed convex set $\scrC\subset\scrZ$, the metric projection is $\Pi_{\scrC}(x)\eqdef\prox_{\indicator_{\scrC}}(x)$, where the convex indicator function $\indicator_{\scrC}:\scrZ\to(-\infty,\infty]$ is defined by $\indicator_{\scrC}(x)=0$ if $x\in\scrC$, and $\indicator_{\scrC}(x)=+\infty$ else. We also denote  $\dist(x,\scrC)\eqdef\norm{x-\Pi_{\scrC}(x)}$. 
The polar cone attached to a set $\scrC\subset\scrZ$ 
is $\scrC^{\circ}\eqdef\{z\in\scrZ\vert \inner{z,x}\leq 0 \quad \forall x\in\scrC\}$. 
The normal cone at point $z\in \scrC$ is defined as $\NC_{\scrC}(z)\eqdef\{p\in\scrZ\vert \inner{p,z'-z}\leq 0\quad\forall z'\in\scrC\}$. 
For any convex $\scrC\subset\scrZ$ (not neccesarily closed),
the tangent cone $\TC_{\scrC}(x)$ of $\scrC$ at $x\in\scrC$ is polar to the normal cone, and we have the explicit expression $\TC_{\scrC}(x)=\cl\left(\bigcup_{\lambda>0}\frac{\scrC-x}{\lambda}\right)$.

The Fenchel conjugate of $f\in\Gamma_{0}(\scrZ)$ is $f^{\ast}(y)\eqdef\sup_{x\in\dom(f)}\{\inner{y,x}-f(x)\}.$ 
Specifically,  given a closed convex set $\scrC\subset\scrZ$, the Fenchel conjugate of $\indicator_\scrC$ is 
the support function of set $\scrC$ defined as $\supp(p|\scrC)\eqdef\sup_{z\in\scrC}\inner{p,z}.$ Note that 
\begin{equation}\label{eq:support_equality}
\inner{p,z}=\supp(p|\scrC)\qquad \forall p\in\NC_{\scrC}(z).
\end{equation}
A mapping (operator) $\opF:\scrZ\to\scrZ$ is $\mu$-strongly-monotone ($\mu\geq0$) if 
\begin{equation*}
\inner{\opF(z_{1})-\opF(z_{2}),z_{1}-z_{2}}\geq \mu\norm{z_{1}-z_{2}}^{2}\qquad \forall z_{1},z_{2}\in\scrZ.
\end{equation*}
A $0$-monotone operator is simply called monotone. $\opF$ is $L$-Lipschitz continuous if 
\begin{equation*}
\norm{\opF(z_{1})-\opF(z_{2})}\leq L\norm{z_{1}-z_{2}}\qquad\forall z_{1},z_{2}\in\scrZ.
\end{equation*}

\subsection{Hemi-variational inequalities (HVI's)}
Given a monotone and Lipschitz continuous mapping $\opF:\scrZ\to\scrZ$ and a function $g\in\Gamma_{0}(\scrZ)$, the $\HVI$ problem $\HVI(\opF,g)$ is to find $z^{\ast}\in\scrZ$ such that 
\begin{equation}\label{eq:VI_general}
\inner{\opF(z^{\ast}),u-z^{\ast}}+g(u)-g(z^{\ast})\geq 0\qquad\forall u\in\scrZ.
\end{equation}
One can rewrite \eqref{eq:VI_general} as a monotone inclusion $0\in(\opF+\partial g)(z^{\ast})$, or equivalently the solution set is $\scrS=\zer(\opF+\partial g)$. If $g=\indicator_{\scrC}$ for a closed convex set $\scrC\subset\scrZ$, we recover the variational inequality problem $\VI(\opF,\scrC)$:
\begin{equation*}
\text{Find }z^{\ast}\in\scrC\text{ s.t. } \inner{\opF(z^{\ast}),u-z^{\ast}}\geq 0 \qquad\forall u\in\scrC. 
\end{equation*}
Let $\scrC\subset\dom(g)$ be a given compact subset. A popular merit function for $\HVI(\opF,g)$ is the localized (dual) gap function 
\begin{equation}
\Theta(z\vert\opF,g,\scrC)\eqdef \sup_{y\in\scrC}\{\inner{\opF(y),z-y}+g(z)-g(y)\}=\sup_{y\in\scrC} H^{(\opF,g)}(z,y),
\label{eq:gap_Def_new}
\end{equation}
where $H^{(\opF,g)}:\scrZ\times\scrZ\to[-\infty,\infty]$ is defined as 
\begin{equation}
    \label{eq:HFG_def}
    H^{(\opF,g)}(z,y)\eqdef \inner{\opF(y),z-y}+g(z)-g(y).
\end{equation}
Historically, this localized version of a gap function can be traced back to \cite{Minty62,kinderlehrer2000introduction}. Lemma \ref{lem:gap} below, whose proof is given in Appendix~\ref{app:proofs}, extends a result proved in \cite{NesDual07}, and summarizes its role for the numerical resolution of the problem $\HVI(\opF,g)$ under continuity assumptions on $\opF$.
\begin{lemma}\label{lem:gap}
    Let $\scrC\subset\dom(g)$  be a nonempty compact convex set. Consider problem $\HVI(\opF,g)$ with $\opF:\scrZ\to\scrZ$ monotone and Lipschitz continuous. The function $x\mapsto \Theta(x\vert\opF,g,\scrC)$ is well-defined and convex on $\scrZ$. For any $x\in\scrC$ we have $\Theta(x\vert \opF,g,\scrC)\geq 0.$ Moreover, if $x\in\scrC$ is a solution to \eqref{eq:VI_general}, then $\Theta(x\vert\opF,g,\scrC)=0$. Conversely, if $\Theta(x\vert\opF,g,\scrC)=0$ for some $x\in\scrC$ for which there exists an $\eps>0$ such that $\ball(x,\eps)\cap\scrC=\ball(x,\eps)\cap\dom(g)$, then $x$ is a solution of \eqref{eq:VI_general}.   
\end{lemma}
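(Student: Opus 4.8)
The plan is to handle the four assertions separately: the first is a routine semicontinuity/compactness argument, the second is immediate, the third is the classical Minty/monotonicity trick, and the fourth — the converse — is where the local coincidence hypothesis on $\scrC$ and $\dom(g)$ does the real work. For \emph{well-definedness}, I would fix $x\in\scrZ$ and examine $y\mapsto H^{(\opF,g)}(x,y)=\inner{\opF(y),x-y}+g(x)-g(y)$ on $\scrC$. If $x\notin\dom(g)$ then $g(x)=+\infty$ while $g$ is finite on $\scrC\subset\dom(g)$, so the supremum is trivially $+\infty$; if $x\in\dom(g)$, then $y\mapsto\inner{\opF(y),x-y}$ is continuous (Lipschitz continuity of $\opF$ together with continuity of the inner product) and $y\mapsto -g(y)$ is upper semicontinuous (as $g$ is lsc), hence $y\mapsto H^{(\opF,g)}(x,y)$ is upper semicontinuous on the compact set $\scrC$ and attains a finite maximum, so $\Theta(x\vert\opF,g,\scrC)\in\R$. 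For \emph{convexity}, I would write $\Theta(\cdot\vert\opF,g,\scrC)=\sup_{y\in\scrC}\big(\inner{\opF(y),\cdot}+g(\cdot)-\inner{\opF(y),y}-g(y)\big)$: each function under the supremum is the sum of an affine function, the convex function $g$, and a constant, hence convex, and a pointwise supremum of convex functions is convex.

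For the two easy implications: if $x\in\scrC$, taking $y=x$ in the supremum gives $H^{(\opF,g)}(x,x)=0$, whence $\Theta(x\vert\opF,g,\scrC)\geq 0$. If moreover $x\in\scrC$ solves \eqref{eq:VI_general}, then for every $y\in\scrC\subset\scrZ$ monotonicity of $\opF$ yields $\inner{\opF(y),x-y}\leq\inner{\opF(x),x-y}$, so
\[
H^{(\opF,g)}(x,y)=\inner{\opF(y),x-y}+g(x)-g(y)\leq -\big(\inner{\opF(x),y-x}+g(y)-g(x)\big)\leq 0,
\]
the last step being \eqref{eq:VI_general} with $u=y$; taking the supremum over $y\in\scrC$ gives $\Theta(x\vert\opF,g,\scrC)\leq 0$, and combined with nonnegativity, $\Theta(x\vert\opF,g,\scrC)=0$.

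For the converse, assume $x\in\scrC$, $\Theta(x\vert\opF,g,\scrC)=0$, and $\ball(x,\eps)\cap\scrC=\ball(x,\eps)\cap\dom(g)$. From $\Theta(x\vert\opF,g,\scrC)=0$ we get $H^{(\opF,g)}(x,y)\leq 0$ for all $y\in\scrC$, a Minty-type inequality localized to $\scrC$. Fix $u\in\scrZ$; if $u\notin\dom(g)$ then \eqref{eq:VI_general} holds trivially since its left-hand side is $+\infty$, so assume $u\in\dom(g)$. For $t\in(0,1]$ set $y_t\eqdef(1-t)x+tu$; convexity of $\dom(g)$ gives $y_t\in\dom(g)$, and once $t\norm{u-x}<\eps$ we have $y_t\in\ball(x,\eps)\cap\dom(g)=\ball(x,\eps)\cap\scrC\subset\scrC$, so the inequality applies at $y_t$. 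Using $x-y_t=-t(u-x)$ and the convexity bound $g(y_t)-g(x)\leq t\big(g(u)-g(x)\big)$, the relation $H^{(\opF,g)}(x,y_t)\leq 0$ rearranges, after dividing by $t>0$, to $\inner{\opF(y_t),u-x}+g(u)-g(x)\geq 0$. Letting $t\to 0^{+}$ and using continuity of $\opF$ (so $\opF(y_t)\to\opF(x)$) yields $\inner{\opF(x),u-x}+g(u)-g(x)\geq 0$; since $u$ was arbitrary, $x$ solves \eqref{eq:VI_general}.

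The one step I expect to be genuinely delicate is the converse: the zero-gap condition only furnishes a Minty-type inequality over $\scrC$, and converting it to a Stampacchia-type inequality valid on all of $\scrZ$ needs both the standard linearize-and-pass-to-the-limit argument \emph{and} the guarantee that short segments from $x$ toward points of $\dom(g)$ stay inside $\scrC$. That guarantee is exactly what $\ball(x,\eps)\cap\scrC=\ball(x,\eps)\cap\dom(g)$ provides (it holds, e.g., when $\scrC$ contains a relative neighborhood of $x$ in $\dom(g)$, in particular when $x\in\Int\dom(g)$ and $\scrC\supseteq\ball(x,\eps)$); everything else in the proof is upper semicontinuity on a compact set plus monotonicity.
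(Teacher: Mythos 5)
Your proof is correct and follows essentially the same route as the paper's: the nonnegativity and the ``solution implies zero gap'' directions use the identical monotonicity argument, and the converse rests on the same Minty-to-Stampacchia linearization along segments $x+t(w-x)$ with $t\to 0^{+}$, using the hypothesis $\ball(x,\eps)\cap\scrC=\ball(x,\eps)\cap\dom(g)$ to reach points of $\dom(g)$. The only (harmless) differences are that you fold the paper's two-stage converse (first Stampacchia over $\scrC$, then extension to $\dom(g)$ by contradiction) into a single direct pass, and that you additionally supply the upper-semicontinuity/compactness argument for well-definedness and the supremum-of-convex-functions argument for convexity, which the paper's proof leaves implicit.
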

 
\subsection{Sharpness and error-bound property}
Our geometric framework is phrased in terms of weak sharpness of the lower-level solution set $\scrS_{2}$. Originally formulated for optimization problems in \cite{burke1993weak}, weak sharpness in the context of variational inequalities has been defined in \cite{patriksson1993unified}. Subsequently, implications in terms of error bounds of primal and dual gap functions have been developed in \cite{marcotte1998weak,liu2016characterization}. The following definition of weak sharpness is from \cite{Huang:2018aa}. 

\begin{definition}
Let $\opF:\scrZ\to\scrZ$ be continuous and monotone over $\dom(g)\subset\scrZ$. Assume $\dom(g)$ is closed. The solution set $\scrS$ of $\HVI(\opF,g)$ is \emph{weakly sharp} if there exists $\tau>0$ such that 
\[
(\forall z^{\ast}\in\scrS):\quad \tau\ball(0,1)\subseteq\opF(z^{\ast})+\partial g(z^{\ast})+[\TC_{\dom(g)}(z^{\ast})\cap\NC_{\scrS}(z^{\ast})]^{\circ} .
\]
\end{definition}

Based on \cite{Huang:2018aa}, we give the following characterization of weak sharpness in terms of an error bound for the dual gap function of $\HVI(\opF,g)$;
see proof in Appendix \ref{app:proofs}.
\begin{proposition}\label{prop:EB1}
Consider problem $\HVI(\opF,g)$ with $\dom(g)$ a closed, convex, and nonempty subset of $\scrZ$. If the solution set $\scrS$ of $\HVI(\opF,g)$ is weakly sharp, then 
\begin{equation*}
(\forall z\in \dom(g)):\quad \Theta(z\vert\opF,g,\dom(g))\geq \tau\dist(z,\scrS).
\end{equation*}
\end{proposition}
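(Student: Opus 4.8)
The plan is to unpack the definition of weak sharpness at a point, choose the worst-case direction in the unit ball, and match it against the supremum defining $\Theta(z\mid\opF,g,\dom(g))$. Fix $z\in\dom(g)$ and let $z^{\ast}=\Pi_{\scrS}(z)$, which is well-defined since $\scrS$ is a nonempty closed convex set (closedness and convexity of $\scrS=\zer(\opF+\partial g)$ follow from monotonicity of $\opF$ and convexity of $g$, in a Hilbert space). The vector $z-z^{\ast}$ points into the tangent cone $\TC_{\dom(g)}(z^{\ast})$ because $z\in\dom(g)$ and $\dom(g)$ is convex; moreover, by the projection characterization, $z-z^{\ast}\in\NC_{\scrS}(z^{\ast})$. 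Hence $z-z^{\ast}\in\TC_{\dom(g)}(z^{\ast})\cap\NC_{\scrS}(z^{\ast})$, so every element $w$ of the polar cone $[\TC_{\dom(g)}(z^{\ast})\cap\NC_{\scrS}(z^{\ast})]^{\circ}$ satisfies $\inner{w,z-z^{\ast}}\leq 0$.

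Next I would exploit weak sharpness: applying the definition at $z^{\ast}$, for any unit vector $d$ there exist $\xi\in\partial g(z^{\ast})$ and $w\in[\TC_{\dom(g)}(z^{\ast})\cap\NC_{\scrS}(z^{\ast})]^{\circ}$ with $\tau d=\opF(z^{\ast})+\xi+w$. Choosing $d=(z-z^{\ast})/\norm{z-z^{\ast}}$ (assume $z\notin\scrS$; otherwise the bound is trivial), we get
\[
\tau\dist(z,\scrS)=\tau\norm{z-z^{\ast}}=\inner{\opF(z^{\ast})+\xi+w,\,z-z^{\ast}}\leq \inner{\opF(z^{\ast}),z-z^{\ast}}+\inner{\xi,z-z^{\ast}},
\]
using $\inner{w,z-z^{\ast}}\leq 0$ from the previous step. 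The subgradient inequality $\inner{\xi,z-z^{\ast}}\leq g(z)-g(z^{\ast})$ then yields $\tau\dist(z,\scrS)\leq \inner{\opF(z^{\ast}),z-z^{\ast}}+g(z)-g(z^{\ast})=H^{(\opF,g)}(z,z^{\ast})$. Since $z^{\ast}\in\scrS\subseteq\dom(g)$, this quantity is at most $\sup_{y\in\dom(g)}H^{(\opF,g)}(z,y)=\Theta(z\mid\opF,g,\dom(g))$, which is exactly the claimed inequality.

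The main subtlety — and where I would be most careful — is the interplay between the membership $d\in\tau\ball(0,1)$ needed to invoke the definition and the direction $z-z^{\ast}$ actually being in the interior versus boundary of the ball: strictly the definition gives $\tau\ball(0,1)\subseteq\opF(z^{\ast})+\partial g(z^{\ast})+(\cdots)^{\circ}$, so the point $\tau d$ on the sphere lies in the right-hand set by closedness of the sum (or by a limiting argument over $\tau'<\tau$, which suffices since $\tau$ is arbitrary up to the final constant). A second point to handle cleanly is whether $\Theta(z\mid\opF,g,\dom(g))$ is even finite — it may well be $+\infty$ on $\dom(g)$ when $\dom(g)$ is unbounded, in which case the inequality holds vacuously; the statement restricts to $z\in\scrD_{g}$ (presumably $\scrD_g=\dom(g)$ or the set where the gap is finite), so I would simply note that the lower bound $H^{(\opF,g)}(z,z^{\ast})$ is a valid lower bound for the supremum regardless of whether the latter is finite.
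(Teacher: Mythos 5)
Your proposal is correct and follows essentially the same route as the paper's proof: project $z$ onto $\scrS$, observe that the normalized direction $(z-z^{\ast})/\norm{z-z^{\ast}}$ lies in $\TC_{\dom(g)}(z^{\ast})\cap\NC_{\scrS}(z^{\ast})$, invoke the weak-sharpness inclusion at that direction, discard the polar-cone term by polarity, and finish with the subgradient inequality to lower-bound $\Theta(z\vert\opF,g,\dom(g))$ by $H^{(\opF,g)}(z,z^{\ast})$. The two subtleties you flag (boundary of the unit ball, possible infiniteness of the supremum) are non-issues and the paper does not address them either.
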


Proposition~\ref{prop:EB1} shows that weak sharpness implies that the gap function satisfies an error bound property \cite{pang1997error},
motivating the following definition:
\begin{definition}[Weak Sharpness]
\label{def:WS}
Let $\scrS$ be the solution set of $\HVI(\opF,g)$. We say $\scrS$ is $(\alpha,\rho)$-weakly sharp with $\alpha>0$ and $\rho\geq 1$ if
\begin{equation}\label{eq:WSDef}
(\forall z^{\ast}\in\scrS)(\forall z\in \dom(g)):\quad H^{(\opF,g)}(z,z^{\ast})\geq\frac{\alpha}{\rho}\dist(z,\scrS)^{\rho}.
\end{equation}
\end{definition}
\begin{remark}
     An important class of examples arises when $\rho=1$, notably in monotone linear complementarity problems in finite dimensions under nondegeneracy conditions \cite{burke1993weak,pang1997error}. In the case where the $\HVI(\opF,g)$ reduces to a convex optimization problem, weak sharpness implies an H\"olderian error bound, an assumption already imposed by \cite{Cabot2005} in the context of hierarchical minimization. Specifically, let us assume that $\opF=0$ and $g\in\Gamma_{0}(\scrZ)$. Then $\scrS=\argmin g$, and accordingly, $H^{(0,g)}(z,z^{*})=g(z)-\min g$ for $z^{*}\in\scrS$. Hence, \eqref{eq:WSDef} implies 
    \[
    \frac{\alpha}{\rho}\dist(u,\argmin g)^{\rho}\leq g(u)-\min g\qquad \forall u\in\scrZ.
    \]
If $\rho=1$, this is the weak-sharpness condition of \cite{burke1993weak}. The case $\rho=2$ corresponds to the "quadratic growth" condition of \cite{drusvyatskiy2018error}, a relaxation of strong convexity.\close
\end{remark}

\subsection{Constraint qualifications}
The following set of assumptions shall be in place throughout the paper. 
\begin{assumption}\label{ass:Mappings}
For $i=1,2$ the following properties do hold:
\begin{itemize}
\item[(i)] $\opF_{i}:\scrZ\to \scrZ$ is $L_{\opF_i}$-Lipschitz continuous and monotone; 
\item[(ii)] $g_{i}\in\Gamma_{0}(\scrZ)$ and $\dom(g_{1})\cap\dom(g_{2})\neq \emptyset$;
\item[(iii)] $\scrS_{2}\eqdef \zer(\opF_{2}+\partial g_{2})\neq\emptyset$.
\end{itemize}
\end{assumption}
We remark that Assumption \ref{ass:Mappings} implies that $\scrS_{2}$ is a non-empty, closed, and convex set. The next assumption is essentially a constraint qualification condition on the composite function 
$
x\mapsto \inner{\opF_{1}(\bar{x}),x-\bar{x}}+g_{1}(x)-g_{1}(\bar{x})+\indicator_{\scrS_{2}}(x). 
$
\begin{assumption}
\label{ass:CQ}
The solution set $\scrS_{1}$ of \eqref{eq:P} satisfies
$\scrS_{1}=\zer(\opF_{1}+\partial g_{1}+\NC_{\scrS_{2}})\neq\emptyset$.
 
\end{assumption}
Similar assumptions have been made in \cite{Bot:2014aa} for solving constrained VI's. Assumption \ref{ass:CQ} is motivated by conditions ensuring a non-smooth sum rule of the form
\begin{equation*}
\partial_{x}\left(\inner{\opF_{1}(\bar{x}),\Id_{\scrZ}(\bullet)-\bar{x}}+g_{1}(\bullet)-g_{1}(\bar{x})+\indicator_{\scrS_{2}}(\bullet)\right)(\bar{x})=\opF_{1}(\bar{x})+\partial g_{1}(\bar{x})+\NC_{\scrS_{2}}(\bar{x}). 
\end{equation*}
Common assumptions ensuring this property can be found in \cite[Corollary 16.48]{BauCom16}.

\subsection{Gap functions for hierarchical HVI's}
\label{sec:gapBilevel}
Since we are searching for a solution of a hierarchical system of VIs, we have to introduce different merit functions for measuring the feasibility and the optimality of a test point $z\in\scrZ$.
\begin{definition}[Feasibility gap]
We define the \emph{feasibility gap} over a compact nonempty set $\scrU\subseteq\dom(g_{2})$ as
\begin{equation}\label{eq:FeasGap}
\Theta_{\rm Feas}(z\vert\scrU)\eqdef \Theta(z\vert \opF_{2},g_{2},\scrU)=\sup_{y\in\scrU}\{\inner{\opF_{2}(y),z-y}+g_{2}(z)-g_{2}(y)\}.
\end{equation}
\end{definition}
\begin{definition}[Optimality gap]
The \emph{optimality gap} over a compact set $\scrU\subseteq\dom(g_{1})$ with $\scrU\cap\scrS_{2}\neq\emptyset$ is defined as 
\begin{equation}\label{eq:OptGap}
\Theta_{\rm Opt}(z\vert\scrU\cap\scrS_{2})\eqdef \Theta(z\vert \opF_{1},g_{1},\scrU\cap\scrS_{2})=\sup_{y\in\scrU\cap\scrS_{2}}\{\inner{\opF_{1}(y),z-y}+g_{1}(z)-g_{1}(y)\}.
\end{equation}
\end{definition}
We note that if $z\in\scrU\cap\scrS_{2}$, and the regularity conditions stated in Lemma \ref{lem:gap} hold, then by the same lemma, the inequality $\Theta_{\rm Opt}(z\vert\scrU\cap\scrS_{2})\leq 0$ is equivalent to $z\in \scrS_{1}$. However, in general an inequality of the form $\Theta_{\rm Opt}(z\vert\scrU\cap\scrS_{2})\leq 0$, does not tell us much about the qualitative properties of the test point if $z\notin\scrS_{2}$. It is thus important to obtain a lower bound on the optimality gap. This is achieved by Lemma \ref{lem:LB} below, with proof provided in Appendix \ref{app:proofs}. It extends the corresponding result in \cite{boct2025accelerating} to the operator case. \cite{samadi_improved_2025} and \cite{alves2025inertial} prove this result for the special case of bilevel VIs on compact domains.

\begin{lemma}\label{lem:LB}
Consider problem \eqref{eq:P}. Let Assumption \ref{ass:Mappings} and \ref{ass:CQ} hold. Let $\scrU_1\subseteq\dom(g_{1})$ be a nonempty compact set with $\scrU_1\cap\scrS_{1}\neq\emptyset$. Then, there exists a constant $B_{\scrU_1}>0$ such that
\begin{equation}\label{eq:LB1}
\Theta_{\rm Opt}(z\vert\scrU_1\cap\scrS_1)\geq-B_{\scrU_1}\dist(z,\scrS_{2}), \quad\forall z\in\scrZ.
\end{equation}
Suppose $\scrS_{2}$ is $(\alpha,\rho)$-weakly sharp. Then for all nonempty and compact subsets $\scrU_2\subseteq\dom(g_{2})$ s.t. $\scrU_2\cap \scrS_2\ne \emptyset$, and all $z\in\dom(g_{2})$, we have  
\begin{equation}\label{eq:WS}
\dist(z,\scrS_{2})\leq\left[\frac{\rho}{\alpha}\Theta_{\rm Feas}(z\vert \scrU_2\cap \scrS_2)\right]^{1/\rho}.
\end{equation}
\end{lemma}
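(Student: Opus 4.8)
The plan is to handle the two inequalities independently, since they rely on completely different mechanisms: \eqref{eq:WS} is a one-line consequence of the weak-sharpness definition, whereas \eqref{eq:LB1} requires extracting a normal-cone multiplier at a reference point of $\scrU_1\cap\scrS_1$.

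For \eqref{eq:WS} I would argue directly from Definition \ref{def:WS}. Assuming (as is implicit for the displayed inequality to be meaningful) that $\scrU_2\cap\scrS_2\neq\emptyset$, fix any $y\in\scrU_2\cap\scrS_2\subseteq\scrS_2$; since $y\in\scrS_2$ and $z\in\dom(g_2)$, $(\alpha,\rho)$-weak sharpness of $\scrS_2$ gives $H^{(\opF_2,g_2)}(z,y)\geq\alpha\rho^{-1}\dist(z,\scrS_2)^{\rho}$. Taking the supremum over $y\in\scrU_2\cap\scrS_2$ turns the left-hand side into $\Theta_{\rm Feas}(z\vert\scrU_2\cap\scrS_2)$, which is therefore $\geq\alpha\rho^{-1}\dist(z,\scrS_2)^{\rho}\geq0$; solving for $\dist(z,\scrS_2)$ yields \eqref{eq:WS}.

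The substantive part is \eqref{eq:LB1}. First I would dispose of $z\notin\dom(g_1)$, where $\Theta_{\rm Opt}(z\vert\scrU_1\cap\scrS_1)=+\infty$ and there is nothing to prove, so I may assume $z\in\dom(g_1)$. The key step is to pick one fixed reference point $\bar z\in\scrU_1\cap\scrS_1$ (nonempty by hypothesis) and invoke Assumption \ref{ass:CQ}, which gives $0\in\opF_1(\bar z)+\partial g_1(\bar z)+\NC_{\scrS_2}(\bar z)$; hence there exist $\xi\in\partial g_1(\bar z)$ and $\zeta\in\NC_{\scrS_2}(\bar z)$ with $\opF_1(\bar z)=-\xi-\zeta$. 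Bounding the supremum defining $\Theta_{\rm Opt}$ from below by its value at $y=\bar z$ and substituting,
\[
\inner{\opF_1(\bar z),z-\bar z}+g_1(z)-g_1(\bar z)=\big(g_1(z)-g_1(\bar z)-\inner{\xi,z-\bar z}\big)-\inner{\zeta,z-\bar z}.
\]
The first bracket is $\geq0$ by the subgradient inequality for $\xi\in\partial g_1(\bar z)$. For the term $-\inner{\zeta,z-\bar z}$ I would insert the projection: $z-\bar z=(z-\Pi_{\scrS_2}(z))+(\Pi_{\scrS_2}(z)-\bar z)$, where $\inner{\zeta,\Pi_{\scrS_2}(z)-\bar z}\leq0$ because $\Pi_{\scrS_2}(z)\in\scrS_2$ and $\zeta\in\NC_{\scrS_2}(\bar z)$, while $\inner{\zeta,z-\Pi_{\scrS_2}(z)}\leq\norm{\zeta}\,\dist(z,\scrS_2)$ by Cauchy--Schwarz. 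Combining gives $\Theta_{\rm Opt}(z\vert\scrU_1\cap\scrS_1)\geq-\norm{\zeta}\,\dist(z,\scrS_2)$, so \eqref{eq:LB1} holds with $B_{\scrU_1}\eqdef\norm{\zeta}$.

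I expect the only real obstacle to be recognizing that a \emph{single} point of $\scrU_1\cap\scrS_1$ already manufactures the constant $B_{\scrU_1}$ — no uniformity over $\scrU_1$ is needed — and correctly using Assumption \ref{ass:CQ} to split $-\opF_1(\bar z)$ into a subgradient part absorbed by $g_1(z)-g_1(\bar z)$ and a normal-cone part controlled by $\dist(\cdot,\scrS_2)$ through the projection estimate. Everything else is routine; in particular, no compactness of $\scrU_1$, $\scrU_2$, or $\scrS_2$ is used, only that $\scrS_2$ is nonempty, closed and convex so that $\Pi_{\scrS_2}$ is well defined.
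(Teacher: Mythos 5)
Your proposal is correct and follows essentially the same route as the paper: for \eqref{eq:LB1} both pick a reference point in $\scrU_1\cap\scrS_1$, use Assumption \ref{ass:CQ} to split $-\opF_1$ into a subgradient part (absorbed by the subgradient inequality) and a normal-cone part (controlled via the projection onto $\scrS_2$ and Cauchy--Schwarz), and for \eqref{eq:WS} both apply Definition \ref{def:WS} directly and take the supremum. Your observations that a single reference point suffices for the constant and that $\scrU_2\cap\scrS_2\neq\emptyset$ is implicitly needed are accurate but do not change the argument.
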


\section{Optimistic extragradient method}
\label{sec:OMD}
%

Our algorithmic design follows the popular Tikhonov regularization approach and, at iteration $k\geq 1$, uses the operators $\opV_k$ and $G_k$ defined in \eqref{eq:VkGk_def}, which combine the upper- and lower-level data. A decreasing sequence of regularization parameters $\sigma_k \geq 0 $ determines the relative importance of the upper-level problem relative to the lower-level problem. Being a sum of monotone and Lipschitz continuous mappings, $\opV_{k}$ is monotone and $L_k$-Lipschitz with $L_{k}\eqdef L_{\opF_{2}}+\sigma_{k}L_{\opF_{1}}$. 
The conceptual algorithm we propose for solving \eqref{eq:P} is described in Algorithm \ref{alg:Alg1}. Once concrete definitions for the step-sizes $(t_{k})_{k\geq1}$ and the regularization parameters $(\sigma_{k})_{k\geq1}$ are given, we obtain a bona-fide algorithm. Importantly, it should be observed that our method only requires one evaluation of operators $\opF_1$, $\opF_2$ per iteration, in contrast to previous works \cite{samadi_improved_2025,alves2025inertial}.
\begin{algorithm}
\caption{Optimistic extragradient method for hierarchical HVI's \eqref{eq:P}}\label{alg:Alg1}
\begin{algorithmic}
\Require $z^{1}=z^{1/2}$, step-size sequence $(t_{k})_{k\geq 1} \subset \R_{>0}$, decreasing regularization sequence $(\sigma_{k})_{k\geq 1}\subset \R_{>0}$ satisfying \vspace{-6pt}
\begin{equation}\label{eq:tsigma}
\begin{split}
&\lim_{k\to\infty}\sigma_{k}=0,\quad \lim_{T\to\infty}\sum_{k=1}^{T}t_{k}= \infty,\\
&\lim_{T\to\infty} \sigma_{T}\sum_{k=1}^{T}t_{k}= \infty \text{, and }\lim_{T\to\infty}\frac{\sum_{k=1}^{T}t_{k}\sigma_{k}}{\sum_{k=1}^{T}t_{k}}=0.
\end{split}
\end{equation}
\vspace{-20pt}
\State Set $k=1$ 
\While{stopping criterion not met}
\State Compute \vspace{-10pt}
\begin{align*}
&z^{k+1/2}=\prox_{t_{k}G_{k}}(z^{k}-t_{k}(\opF_{2}(z^{k-1/2})+\sigma_{k}\opF_{1}(z^{k-1/2}))),\\ 
&z^{k+1}=\prox_{t_{k}G_{k}}(z^{k}-t_{k}(\opF_{2}(z^{k+1/2})+\sigma_{k}\opF_{1}(z^{k+1/2})).
\end{align*}
\vskip-10pt
\EndWhile
\end{algorithmic}
\end{algorithm}

\begin{remark}
To simplify the presentation, we implicitly assume that the proximal mapping of $G_{k}$ can be evaluated efficiently. In many practically relevant situations this is indeed the case. Trivially, if $g_{1}$ or $g_{2}$ is zero, we obtain an easier implementation. Other examples include the cases where either $g_{1}$ is an $L_{1}$ norm and $g_{2}$ is an indicator function of a simple set, such as a box or an $L_{2}$-ball \cite{BauCom16}. In general though, the evaluation of the proximal operator of the composite mapping $G_{k}=\sigma_{k}g_{1}+g_{2}$ is not an easy task. We can circumvent this problem, by adopting a lifted formulation  \cite{doron2023methodology}. To obtain such a formulation, let $x=[z_{1},z_{2}]\in\scrZ\otimes\scrZ\eqdef \scrX$ and consider the operators 
\begin{align*}
&\opA_{1}(x)=\begin{pmatrix} \partial g_{1}(z_{1}) \\ 0 \end{pmatrix}, \opA_{2}(x)=\begin{pmatrix} 0 \\ \partial g_{2}(z_{2}) \end{pmatrix}:\scrX\to 2^{\scrX},\text{ and } \\ 
&\opB_{1}(x)=\begin{pmatrix} \opF_{1}(z_{1})\\ 0 \end{pmatrix}, \opB_{2}(x)=\begin{pmatrix} z_{1}-z_{2} \\ \opF_{2}(z_{2})-z_{1}+z_{2}\end{pmatrix} :\scrX\to\scrX.
\end{align*}
It is easy to see that 
$
\tilde{\scrS}_{2}=\zer(\opA_{2}+\opB_{2})=\{(z_{1},z_{2})\in\scrZ\times\scrZ\vert z_{1}=z_{2},\text{ and } z_{2}\in \zer(\opF_{2}+\partial g_{2})\}. 
$
This means we can obtain $\scrS_{2}$ as the projection of points in $\tilde{\scrS}_{2}$ onto its first coordinate. Moreover, the problem 
\begin{equation*}
\text{Find } \bar{x}=[\bar{z}_{1},\bar{z}_{2}] \in\scrX \text{ s.t. }\inner{\opF_{1}(\bar{z}_{1}),z_{1}-\bar{z}_{1}}+g_{1}(z_{1})-g_{1}(\bar{z}_{1})\geq 0 \quad\forall  \bar{x}\in\tilde{\scrS}_{2} 
\end{equation*}
shares the same solution as the non-lifted formulation \eqref{eq:P} after projection. We can thus approach the solution of the lifted problem by performing the parallel computations 
\begin{align*}
x^{k+1/2}=(\Id_{\scrX}+t_{k}\scrA_{k})^{-1}(x^{k}-t_{k}\scrB_{k}(x^{k-1/2})),\\ 
x^{k+1}=(\Id_{\scrX}+t_{k}\scrA_{k})^{-1}(x^{k}-t_{k}\scrB_{k}(x^{k+1/2})),
\end{align*}
where $\scrA_{k}\eqdef\sigma_{k}\opA_{1}+\opA_{2}$ and $\scrB_{k}\eqdef \sigma_{k}\opB_{1}+\opB_{2}$. Thanks to the direct product structure of the lifting, it is easy to see that the resolvents indeed involve parallel computations on the individual factors. As an illustration, the update of $x^{k+1/2}=[z_{1}^{k+1/2},z_{2}^{k+1/2}]$ reads explicitly as 
\begin{align*}
z_{1}^{k+1/2}&=\prox_{t_{k}\sigma_{k}g_{1}}(z_{1}^{k}-t_{k}(z_{1}^{k-1/2}-z_{2}^{k-1/2}+\sigma_{k}\opF_{1}(z_{1}^{k-1/2}))),\\
z^{k+1/2}_{2}&=\prox_{t_{k}g_{2}}(z_{2}^{k}-t_{k}(z_{2}^{k-1/2}-z_{1}^{k-1/2}+\opF_{2}(z_{2}^{k-1/2})).
\end{align*}
\close
\end{remark}

\subsection{Statement of the main results}
\label{sec:mainresults}
The main result of this paper are two complexity statements for the averaged iterates generated by Algorithm \ref{alg:Alg1} in terms of the gap functions   \eqref{eq:FeasGap} and \eqref{eq:OptGap}. We work in a specific geometric setting in which we assume some structure on the solution set of the lower level problem $\HVI(\opF_{2},g_{2})$. Associated to the bifunction $H^{(\opF_{2},g_{2})}$, Appendix \ref{app:FP} introduces the mapping 
\begin{equation}\label{eq:phimaintext}
\varphi^{(\opF_{2},g_{2})}(z,u)\eqdef \sup_{y\in\dom(g_{2})} \{H^{(\opF_{2},g_{2})}(z,y)+\inner{y,u}\}.
\end{equation}
This function encodes dual properties of $\HVI(\opF_{2},g_{2})$, since it holds that (cf. eq. \eqref{eq:dualphi}) 
\begin{equation}\label{eq:dualphimaintext}
\varphi^{(\opF_{2},g_{2})}(z,u)\leq\sup_{y\in\dom(g_{2})}\{\inner{y,u}-H^{(\opF_{2},g_{2})}(y,z)\}=\left(H^{(\opF_{2},g_{2})}(\bullet,z)\right)^{\ast}(u). 
    \end{equation}
 The following summability condition is essentially due to \cite{Attouch:2010aa,AttCzarPey11}:
\begin{assumption}\label{ass:ACcondition}[Attouch-Czarnecki condition]
The step size sequence $(t_{k})_{k\geq 1}$ and the regularization sequence $(\sigma_{k})_{k\geq 1}$ satisfy
\begin{equation}\label{eq:AttCza}
(\forall p\in\text{Range}(\NC_{\scrS_{2}})):\;\sum_{k=1}^{\infty}t_{k}\left[\sup_{z\in\scrS_{2}}\varphi^{(\opF_{2},g_{2})}(z,\sigma_{k}p)-\supp(\sigma_{k}p\vert\scrS_{2})\right]<\infty.
\end{equation}
\end{assumption}
    To understand the meaning of Assumption \ref{ass:ACcondition}, it is instructive to specialize our setting to the simple convex bilevel optimization case (cf. Section \ref{ex:SBP}). In that case, we have $\opF_{2}=\nabla f_{2}$ and $\scrS_{2}=\argmin_{z}(f_{2}+g_{2})(z)$, and thus 
    \begin{align*}
       H^{(\opF_{2},g_{2})}(y,z)&\leq f_{2}(y)-f_{2}(z)+g_{2}(y)-g_{2}(z) \eqdef \hat{f}_{2}(y)-\hat{f}_{2}(z)\\
       &=(\hat{f}_{2}-\min\hat{f}_{2})(y)-(\hat{f}_{2}-\min\hat{f}_{2})(z).
    \end{align*}
Hence, $\varphi^{(\opF_{2},g_{2})}(z,p)\leq (\hat{f}_{2}-\min\hat{f}_{2})(z)+(\hat{f}_{2}-\min\hat{f}_{2})^{*}(p)$. Since for $z\in\scrS_{2}$ it holds $(\hat{f}_{2}-\min\hat{f}_{2})(z)=0$, this further implies 
\begin{equation*}
\sup_{z\in\scrS_{2}}\varphi^{(\opF_{2},g_{2})}(z,\sigma_{k}p)-\supp(\sigma_{k}p\vert\scrS_{2})\leq (\hat{f}_{2}-\min\hat{f}_{2})^{*}(\sigma_{k}p)-\supp(\sigma_{k}p\vert\scrS_{2}). 
\end{equation*}
In \cite{AttCzarPey11,Peypouquet:2012aa,boct2025accelerating} the following summability condition is imposed
\begin{equation*}
\sum_{k=1}^{\infty}t_{k}\left[(\hat{f}_{2}-\min\hat{f}_{2})^{*}(\sigma_{k}p)-\supp(\sigma_{k}p\vert\scrS_{2})\right]<\infty.
\end{equation*}
We see that this condition is more restrictive than our condition \eqref{eq:AttCza}.

\begin{remark}
\label{rem:Tikhonov}
Assumption \ref{ass:ACcondition} looks quite daunting to verify, but as already observed in \cite{boct2025accelerating}, it fits very natural to the geometric setting of this paper. Indeed, let us assume that $\scrS_{2}$ is $(\alpha,\rho)$-weakly sharp, with $\alpha>0$ and $\rho>1$. According to Definition \ref{def:WS}, for all $z\in\dom(g_{2})$ and for all $z^{*}\in\scrS_{2}$, we have $H^{(\opF_{2},g_{2})}(z,z^{*})\geq \alpha\rho^{-1}\dist(z,\scrS_{2})^{\rho}$. Hence, eq. \eqref{eq:dualphimaintext} yields for all $z\in\scrS_{2}$
\begin{align*}
\varphi^{(\opF_{2},g_{2})}(z,\sigma_{k}p^{\ast})-\supp(\sigma_{k}p^{\ast}\vert\scrS_{2})&\leq \left(H^{(\opF_{2},g_{2})}(\bullet,z)\right)^{\ast}(\sigma_{k}p^{\ast})-\supp(\sigma_{k}p^{\ast}\vert\scrS_{2})\\
&\leq \alpha^{-\frac{1}{\rho-1}}\left(\frac{\rho-1}{\rho}\right)\sigma_{k}^{\frac{\rho}{\rho-1}}\norm{p^{\ast}}^{\frac{\rho}{\rho-1}}. 
\end{align*}
This shows that under the $(\alpha,\rho)$-weak sharpness condition, the summability condition \eqref{eq:AttCza} is satisfied whenever $\sum_{k\geq 1}t_{k}\sigma^{\frac{\rho}{\rho-1}}_{k}<\infty.$ 
\close
\end{remark}
Next, we assume the non-smooth part $g_{1}$ of $\HVI(\opF_{1},g_{1})$ to satisfy a sort of finite variation property. Specifically, we define the variation  over the set $\scrA\times\scrB\subseteq\scrZ\times\scrZ$ as
\begin{equation*}
\Var(g_{1}\vert \scrA\times\scrB)\eqdef \sup_{(x,y)\in\scrA\times\scrB}\abs{g_{1}(x)-g_{1}(y)}.
\end{equation*} 

\begin{assumption}
\label{ass:BVg}
The set $\dom(g_1)\cap \dom(g_2)$ is closed, and for any compact sets $\scrA,\scrB\subset\scrZ$ satisfying $\scrA,\scrB \subset\dom(g_{1})\cap\dom(g_{2})$, 
we have $\Var(g_{1}\vert \scrA\times\scrB)<\infty$.
\end{assumption}
Assumption \ref{ass:ACcondition} will allow us to prove that the sequences generated Algorithm \ref{alg:Alg1} are bounded, an important step towards deriving convergence rates in terms of the averaged trajectory 
\begin{equation}\label{eq:ergodic}
\bar{z}^{K}\eqdef\frac{1}{T_{K}}\sum_{k=1}^{K}t_{k}z^{k+1/2}, \quad T_{K}\eqdef\sum_{k=1}^{K}t_{k}.
\end{equation}

\begin{theorem}\label{th:Gap1}
Consider problem \eqref{eq:P}. Let Assumptions \ref{ass:Mappings}, \ref{ass:CQ}, and \ref{ass:BVg} hold. Assume additionally that either Assumptions \ref{ass:ACcondition} hold, or that  $\dom(g_{1})\cap\dom(g_{2})$ is compact. 
Let $(\bar{z}^{k})_{k\geq1}$ as in \eqref{eq:ergodic} be generated by Algorithm \ref{alg:Alg1} with $16t_{k}^{2}L_{k}^2\leq 1$ for all $k\geq 1$.
\begin{itemize}
\item[(i)] 
Let $\scrU_{2}\subset\dom(g_{1})\cap\dom(g_{2})$ be non empty and compact. Then, there exists a constant $C_{\scrU_{2}}>0$ for which
\begin{equation}
    \label{eq:Feas_unbounded}
    \Theta_{\rm Feas}(\bar{z}^{K}\vert\scrU_2)\leq \frac{\sup_{z\in\scrU_2}\norm{z^{1}-z}^{2}}{2T_{K}}+ \frac{\sum_{k=1}^{K}t_k \sigma_k}{T_{K}} C_{\scrU_2}.
\end{equation}
\item[(ii)] Let $\scrU_{1}\subset\dom(g_{1})\cap\dom(g_{2})$ be a compact subset with $\scrU_{1}\cap\scrS_{1}\neq\emptyset$. Then there exist constants $B_{\scrU_{1}},C_{\scrU_{1}}>0$ such that 
\begin{equation}\label{eq:Opt_unbounded}
-B_{\scrU_{1}}\dist(\bar{z}^{K},\scrS_{2})\leq \Theta_{\rm Opt}(\bar{z}^{K}\vert\scrU_{1}\cap\scrS_{2})\leq \frac{C_{\scrU_{1}}}{T_{K}\sigma_{K}}.
\end{equation}
\item[(iii)] If the lower level solution set $\scrS_{2}$ is $(\alpha,\rho)$-weakly sharp and $\scrU_2$ in (i) is s.t. $\scrU_2\cap \scrS_2\ne \emptyset$, then for the same set $\scrU_{1}$ as in (ii), we have the lower bound 
\begin{align}
-B_{\scrU_{1}}\left[\frac{\sup_{z\in\scrU_{2}}\norm{z^{1}-z}^{2}}{2T_{K}(\alpha/\rho)}+ \frac{C_{\scrU_{2}}\sum_{k=1}^{K}t_{k}\sigma_{k}}{(\alpha/\rho)T_{K}}\right]^{\frac{1}{\rho}}  \leq  \Theta_{\rm Opt}(\bar{z}^{K}\vert \scrU_{1}\cap\scrS_{2}). \label{eq:complexitySharp}
\end{align}
\end{itemize}
\end{theorem}

\begin{remark}
    Theorem \ref{th:Gap1} derives a rate in terms of the feasibility and the optimality gap of the hierarchical HVI problem \eqref{eq:P}. These rates are  established relative to arbitrary compact sets $\scrU_{1},\scrU_{2}\subset\dom(g_{1})\cap\dom(g_{2})$. Such bounds are meaningful because of the boundedness of the sequence $(z^{k})_{k\geq 1}$, a fact we establish with the derivation of \eqref{eq:Feas_unbounded}. The constants involved in \eqref{eq:Feas_unbounded} and \eqref{eq:Opt_unbounded} depend on the Lipschitz modulus of the operators $\opF_{i}$, and diameter-like constants, which can be exhibited thanks to the boundedness of the trajectory. 
    The stepsize condition is satisfied whenever  
    $t_{k}\leq \frac{1}{4}\frac{1}{L_{\opF_{2}}+\sigma_{k}L_{\opF_{1}}}$. This can be achieved by a time-varying stepsize or, since $\sigma_k$ is decreasing, by a fixed stepsize, e.g., $t_k =t\leq \frac{1}{4} \frac{1}{L_{\opF_{2}}+\sigma_{1}L_{\opF_{1}}}$.
    \close
\end{remark}
Choosing a specific regularization sequence $(\sigma_{k})_{k\geq 1}$ allows us to turn the estimates from Theorem \ref{th:Gap1} into concrete convergence rates. In detail, we consider the constant step size policy $t_{k}=t$ and regularization sequence
\begin{equation}\label{eq:sigma}
\sigma_{k}\eqdef\frac{a}{(k+b)^{\delta}}\qquad a,b>0,\delta\in(0,1).
\end{equation}
\begin{corollary}\label{cor:Polynomialsigma} 
Suppose the same assumptions as in Theorem \ref{th:Gap1} hold. Assume that the regularization sequence $(\sigma_{k})_{k\geq 1}$ is chosen according to \eqref{eq:sigma}, and $t_{k}=t\leq \frac{1}{4(L_{\opF_2}+\sigma_1L_{\opF_2})}$ for all $k\geq 1$. Then, we have 
\begin{align}
    &\Theta_{\rm Feas}(\bar{z}^{K}\vert\scrU_2)\leq \frac{\sup_{z\in\scrU_2}\norm{z^{1}-z}^{2}}{2K t}+ \frac{a C_{\scrU_2}}{(1-\delta)(K+b)^{\delta}}, \label{eq:lower_level_rate_unbounded_detlta} \\
    &-B_{\scrU_{1}}\left[\frac{\sup_{z\in\scrU_{2}}\norm{z^{1}-z}^{2}}{2Kt(\alpha/\rho)}+ \frac{aC_{\scrU_{2}}}{(\alpha/\rho)(1-\delta)(K+b)^{\delta}} \right]^{1/\rho} \nonumber\\
  & \hspace{8em}\overset{(*)}{\leq} \Theta_{\rm Opt}(\bar{z}^{K}\vert\scrU_1\cap\scrS_{2})\leq \frac{C_{\scrU_{1}}}{a(K+b)^{1-\delta}}, \label{eq:upper_level_rate_unbounded_detlta}
\end{align}
 where the inequality $(*)$ holds under the $(\alpha,\rho)$-weak sharpness assumption.
\end{corollary}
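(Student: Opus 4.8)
The plan is to obtain Corollary~\ref{cor:Polynomialsigma} as a mechanical specialization of Theorem~\ref{th:Gap1}: once $t_k\equiv t$ and $\sigma_k=a(k+b)^{-\delta}$ are fixed, every abstract quantity appearing in \eqref{eq:Feas_unbounded}, \eqref{eq:Opt_unbounded} and \eqref{eq:complexitySharp} can be evaluated explicitly. First I would verify the running hypotheses for these choices. The sequence $\sigma_k=a(k+b)^{-\delta}$ is strictly decreasing (hence non-increasing) and tends to $0$ since $\delta>0$; the constant sequence $(t_k)=(t)$ is not summable; and because $(\sigma_k)$ is non-increasing we have $L_k=L_{\opF_2}+\sigma_k L_{\opF_1}\le L_{\opF_2}+\sigma_1 L_{\opF_1}=L_1$, so $t\le(\sqrt 8\,L_1)^{-1}$ forces $8t^2L_k^2\le 8t^2L_1^2\le 1$, which is precisely the step-size condition of Theorem~\ref{th:Gap1}. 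Assumptions~\ref{ass:Mappings}, \ref{ass:CQ}, and Assumption~\ref{ass:ACcondition} (or compactness of $\dom(g_1)\cap\dom(g_2)$) are carried over unchanged.

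Next I would evaluate the three quantities. Trivially $T_K=\sum_{k=1}^K t=Kt$. Since $x\mapsto(x+b)^{-\delta}$ is decreasing on $[0,\infty)$, the integral comparison $\sum_{k=1}^K(k+b)^{-\delta}\le\int_0^K(x+b)^{-\delta}\,dx=\frac{(K+b)^{1-\delta}-b^{1-\delta}}{1-\delta}\le\frac{(K+b)^{1-\delta}}{1-\delta}$ gives
\[
\sum_{k=1}^K t_k\sigma_k \;=\; a t\sum_{k=1}^K(k+b)^{-\delta}\;\le\;\frac{a t\,(K+b)^{1-\delta}}{1-\delta},
\]
while $T_K\sigma_K=Kt\,a(K+b)^{-\delta}\ge\frac{ta}{1+b}(K+b)^{1-\delta}$, using $K/(K+b)\ge 1/(1+b)$ for $K\ge 1$. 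Plugging $T_K=Kt$ and the sum estimate into \eqref{eq:Feas_unbounded} turns the second summand into $\frac{a(K+b)^{1-\delta}}{(1-\delta)K}C_{\scrU_2}$; bounding $(K+b)^{1-\delta}/K\le(1+b)(K+b)^{-\delta}$ (again $K\ge 1$) and absorbing the factor $1+b$ into $C_{\scrU_2}$ yields \eqref{eq:lower_level_rate_unbounded_detlta}. The same substitutions in the bracket of \eqref{eq:complexitySharp} give the lower bound in \eqref{eq:upper_level_rate_unbounded_detlta}, since its first term is exactly $\frac{\sup_{z\in\scrU_2}\norm{z^1-z}^2}{2Kt(\alpha/\rho)}$ and its second term is bounded by $\frac{aC_{\scrU_2}}{(\alpha/\rho)(1-\delta)(K+b)^\delta}$ as above; and inserting the lower bound on $T_K\sigma_K$ into the right-hand side of \eqref{eq:Opt_unbounded} gives $\frac{C_{\scrU_1}}{T_K\sigma_K}\le\frac{(1+b)C_{\scrU_1}}{ta}\,(K+b)^{-(1-\delta)}$, which after re-absorbing $(1+b)/t$ into $C_{\scrU_1}$ is the upper bound in \eqref{eq:upper_level_rate_unbounded_detlta}. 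The clause that $(*)$ holds only under $(\alpha,\rho)$-weak sharpness is inherited from the fact that \eqref{eq:complexitySharp} requires it.

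I do not expect any genuine difficulty; the only thing needing a little care is the constant-chasing, namely recognizing that passing from $(K+b)^{1-\delta}/K$ to the cleaner $(K+b)^{-\delta}$, and from $1/(T_K\sigma_K)$ to $(K+b)^{-(1-\delta)}$, costs only the bounded factors $1+b$ and $(1+b)/t$, which are folded into the generic constants $C_{\scrU_1},C_{\scrU_2}$ already produced by Theorem~\ref{th:Gap1} and Lemma~\ref{lem:LB}. Alternatively one could keep $Kt$ in the first denominators and $(K+b)^{1-\delta}/K$ in the second summands, the displayed form being just a tidier packaging of the same estimate.
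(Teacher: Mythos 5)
Your proposal is correct and follows essentially the same route as the paper, which simply substitutes $T_K=Kt$ and the integral estimate $\sum_{k=1}^K(k+b)^{-\delta}\le\frac{(K+b)^{1-\delta}}{1-\delta}$ into \eqref{eq:Feas_unbounded}, \eqref{eq:Opt_unbounded} and \eqref{eq:complexitySharp}. In fact you are more careful than the paper: as you observe, $(K+b)^{1-\delta}/K$ exceeds $(K+b)^{-\delta}$ by the factor $(K+b)/K\le 1+b$, and $1/(T_K\sigma_K)$ exceeds $\frac{1}{a(K+b)^{1-\delta}}$ by $(1+b)/t$, so the displayed bounds only hold after these bounded factors are absorbed into $C_{\scrU_2}$ and $C_{\scrU_1}$ — a step the paper's one-line computation (``we easily see that $\ldots$'') silently glosses over, and which your constant-chasing makes explicit.
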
 
\begin{remark}
Under the regularization sequence \eqref{eq:sigma} and $(\alpha,\rho)$-weak sharpness of  the lower-level solution set $\scrS_{2}$, a sufficient condition for Assumption \ref{ass:ACcondition} to hold is $1>\delta>1-\frac{1}{\rho}$ (cf. Remark \ref{rem:Tikhonov}). This  coincides with the analysis for the optimization case in \cite[Theorem 2.3]{boct2025accelerating}. \close 
\end{remark}

Theorem \ref{th:Gap1} is instrumental to draw conclusions about the asymptotic behavior of the sequences generated by Algorithm \ref{alg:Alg1}. However, this statement requires one more (mild) assumption on the data defining problem \eqref{eq:P}. 
\begin{assumption}
  \label{ass:coherence}
 The non-smooth functions in the definition of \eqref{eq:P} satisfy the hierarchical coherence condition $\dom(g_{2})\subseteq\dom(g_{1}).$ 
\end{assumption}

\begin{theorem}
\label{th:sequences} 
Consider problem \eqref{eq:P}. Let Assumptions \ref{ass:Mappings}-\ref{ass:ACcondition} and the hierarchic coherence condition Assumption \ref{ass:coherence} hold. Let $(z^{k})_{k\geq1}$ and $(z^{k+1/2})_{k\geq1}$ be generated by Algorithm \ref{alg:Alg1} with $16t_{k}^{2}L_{k}^2\leq 1$ for all $k\geq 1$.
Then, the following holds true:
\begin{itemize}
\item[(i)] Asymptotic feasibility: All weak accumulation points of $(z^{k+1/2})_{k\geq1}$ are contained in $\scrS_{2}$. 
\item[(ii)] Ergodic average optimality: If additionally Assumption~\ref{ass:BVg} holds, then the sequence $(\bar{z}^{k})_{k\geq1}$ converges weakly to an element in $\scrS_{1}$. 
\end{itemize}
\end{theorem}



\section{Proofs of the main results}
\label{sec:AllProofs}
\subsection{Energetic bounds} 
\label{sec:energy}
%

In this section, we develop the energy bounds for the sequences $(z^{k})_{k\geq 1}$, $(z^{k+1/2})_{k\geq 1}$ generated by the optimistic extragradient method. 
To simplify the derivations, we introduce the shortcut notation
\begin{align}
&E_k(z)\eqdef\frac{1}{2}\norm{z^{k}-z}^2\qquad \forall k \geq 1, \label{eq:shortcut_notation_E}\\
&\Psi_{k}(z)\eqdef \inner{\opV_{k}(z^{k+1/2}),z^{k+1/2}-z}+G_{k}(z^{k+1/2})-G_{k}(z)\qquad \forall k \geq 1, \label{eq:shortcut_notation_Psi} \\ 
&D_{k}\eqdef t^{2}_{k-1}\norm{\opV_{k-1}(z^{k-3/2})-\opV_{k-1}(z^{k-1/2})}^{2}\qquad \forall k \geq 2. \label{eq:shortcut_notation_D}
\end{align}

\begin{lemma}\label{lem:estimates}
Let $(\sigma_{k})_{k\geq 1} \subset \R_{\geq 0}$, $(t_{k})_{k\geq 1} \subset \R_{\geq 0}$ and  $z\in\scrZ$ be arbitrary. 
Then, for all $k\geq 1$ we have 
\begin{equation}\label{eq:bound1}
\begin{split}
E_{k+1}(z)\leq E_{k}(z)-\frac{1}{2}\norm{z^{k+1/2}-z^{k}}^{2}-\frac{1}{4}\norm{z^{k+1}-z^{k+1/2}}^{2}+D_{k+1}-t_{k}\Psi_{k}(z)
\end{split}
\end{equation}
If additionally $16 t^{2}_{k}L_{k}^{2}\leq 1$ and defining $D_1\eqdef 0$, we have, for all $k\geq 1$, 
\begin{equation}
\label{eq:Energy}
\begin{split}
E_{k+1}(z)+D_{k+1}&\leq 
E_{k}(z)+D_k -t_k \Psi_k(z) -\frac{1}{4}\norm{z^{k+1/2}-z^{k}}^{2}\\
&-\frac{1}{4}\norm{z^{k+1}-z^{k+1/2}}^{2}.
\end{split}
\end{equation}
\end{lemma}
\begin{proof}
\textbf{Proof of \eqref{eq:bound1}.}  
Applying \eqref{eq:prox} to the iterate $z^{k+1}$, we obtain $\forall z \in \scrZ$
\begin{align}
\inner{z^{k+1}-z^{k},z^{k+1}-z}\leq \inner{t_{k}\opV_{k}(z^{k+1/2}),z-z^{k+1}}+t_{k}\left(G_{k}(z)-G_{k}(z^{k+1})\right).\label{eq:z1}
\end{align}
Doing the same for the iterate $z^{k+1/2}$, but substituting $z=z^{k+1}$, we get $\forall z \in \scrZ$
\begin{equation}\label{eq:z12}
\begin{split}
\inner{z^{k+1/2}-z^{k},z^{k+1/2}-z^{k+1}}&\leq \inner{t_{k}\opV_{k}(z^{k-1/2}),z^{k+1}-z^{k+1/2}}\\
&+t_{k}\left(G_{k}(z^{k+1})-G_{k}(z^{k+1/2})\right).
\end{split}\end{equation}
Now, we observe that 
\begin{align}\label{eq:zk2}
\norm{z^{k+1}-z}^{2}&=\norm{z^{k}-z}^{2}-\norm{z^{k+1}-z^{k}}^{2}+2\inner{z^{k+1}-z^{k},z^{k+1}-z},\\
\norm{z^{k+1}-z^{k}}^{2}&=\norm{z^{k+1}-z^{k+1/2}}^{2}+\norm{z^{k+1/2}-z^{k}}^{2} \nonumber\\
&+2\inner{z^{k+1}-z^{k+1/2},z^{k+1/2}-z^{k}}. \nonumber
\end{align}
Combining these two identities with \eqref{eq:z1} and \eqref{eq:z12}, we can continue as 
\begin{align}
\norm{z^{k+1}-z}^{2}&=\norm{z^{k}-z}^{2}-\norm{z^{k+1}-z^{k+1/2}}^{2}-\norm{z^{k+1/2}-z^{k}}^{2}\nonumber\\
&-2\inner{z^{k+1}-z^{k+1/2},z^{k+1/2}-z^{k}}+2\inner{z^{k+1}-z^{k},z^{k+1}-z}\nonumber\\
&\leq\norm{z^{k}-z}^{2}-\norm{z^{k+1}-z^{k+1/2}}^{2}-\norm{z^{k+1/2}-z^{k}}^{2}\nonumber\\
&+ 2t_{k}\inner{\opV_{k}(z^{k-1/2}),z^{k+1}-z^{k+1/2}}+2t_{k}\inner{\opV_{k}(z^{k+1/2}),z-z^{k+1}}\nonumber\\
&+ 2t_{k}\left(G_{k}(z)-G_{k}(z^{k+1/2})\right). \label{eq:extragradient_proof_1}
\end{align}
Adding and subtracting $z^{k+1/2}$ in the second inner product, leaves us with
\begin{align}
&\norm{z^{k+1}-z}^{2}\leq\norm{z^{k}-z}^{2}-\norm{z^{k+1}-z^{k+1/2}}^{2}-\norm{z^{k+1/2}-z^{k}}^{2}\nonumber\\
& \quad + 2t_{k}\inner{\opV_{k}(z^{k-1/2})-\opV_{k}(z^{k+1/2}),z^{k+1}-z^{k+1/2}}\nonumber\\
&\quad +2t_{k}\inner{\opV_{k}(z^{k+1/2}),z-z^{k+1/2}} + 2t_{k}\left(G_{k}(z)-G_{k}(z^{k+1/2})\right). \label{eq:energy_important_111}
\end{align}
Applying Young's inequality $\inner{a,b}\leq \frac{c}{2}\norm{a}^{2}+\frac{1}{2c}\norm{b}^{2}$ with $c=2t_{k}$ and \eqref{eq:shortcut_notation_D}, we obtain 
\begin{align*}
&2t_{k}\inner{\opV_{k}(z^{k-1/2})-\opV_{k}(z^{k+1/2}),z^{k+1}-z^{k+1/2}} \\
&\leq 
2t^{2}_{k}\norm{\opV_{k}(z^{k-1/2})-\opV_{k}(z^{k+1/2})}^{2}+\frac{1}{2}\norm{z^{k+1}-z^{k+1/2}}^{2}\\
&=2D_{k+1}+\frac{1}{2}\norm{z^{k+1}-z^{k+1/2}}^{2}.
\end{align*}
Plugging this inequality and definitions  \eqref{eq:shortcut_notation_E}, \eqref{eq:shortcut_notation_Psi} into the previous inequality gives us \eqref{eq:bound1}.

\textbf{Proof of \eqref{eq:Energy}.}  Observe that by \eqref{eq:shortcut_notation_D}
\begin{align*}
D_{k+1}&=2t^{2}_{k}\norm{\opV_{k}(z^{k-1/2})-\opV_{k}(z^{k+1/2})}^{2}-D_{k+1}\leq 2t_k^2L_k^2\norm{z^{k+1/2}-z^{k-1/2}}^{2}-D_{k+1}\\
&\leq 4t_{k}^{2}L^{2}_{k}\norm{z^{k+1/2}-z^{k}}^{2}+4t_{k}^{2}L^{2}_{k}\norm{z^{k}-z^{k-1/2}}^{2}-D_{k+1},
\end{align*}
where the first inequality follows from the $L_{k}$-Lipschitz continuity of $\opV_{k}$, and the second inequality follows from $(a+b)^2\leq 2a^2+2b^2$. We can therefore continue, 
\begin{align}
&D_{k+1}-\frac{1}{2}\norm{z^{k+1/2}-z^{k}}^{2} \nonumber\\
&\leq 4t_{k}^{2}L^{2}_{k}\norm{z^{k}-z^{k-1/2}}^{2}-D_{k+1}+\left(4t_{k}^{2}L^{2}_{k}-\frac{1}{2}\right)\norm{z^{k+1/2}-z^{k}}^{2}\label{eq:important_recursion_1}\\
&\leq 4t_{k}^{2}L^{2}_{k}\norm{z^{k}-z^{k-1/2}}^{2}-D_{k+1}-\frac{1}{4}\norm{z^{k+1/2}-z^{k}}^{2},
\label{eq:important_recursion}
\end{align}
where the last inequality is due to the step size choice satisfying $4t_{k}^{2}L^{2}_{k}\leq\frac{1}{4}$. If $k\geq 2$, using the definition of $z^k$ and $z^{k-1/2}$, the non-expansiveness of the proximal mapping, 
and that the step size assumption $16 t^{2}_{k}L_{k}^{2}\leq 1$ yields  $4 t^{2}_{k}L^{2}_{k}\leq 1$, we obtain
\begin{align}
4t_k^2L_k^2\norm{z^k-z^{k-1/2}}^2\leq
4t^{2}_{k}L^{2}_{k} t^{2}_{k-1} \norm{\opV_{k-1}(z^{k-3/2})-\opV_{k-1}(z^{k-1/2})}^{2}\leq D_k.
\label{eq:per_iter_proof_1}
\end{align}
The same inequality holds for $k=1$ since $D_1=0$ and $z^{1/2}=z^1$.
Combining \eqref{eq:per_iter_proof_1} with \eqref{eq:important_recursion}, plugging the result into \eqref{eq:bound1}, using the notation \eqref{eq:shortcut_notation_Psi}, and rearranging terms, we obtain \eqref{eq:Energy}.
\end{proof}

\subsection{Towards proving Theorem \ref{th:Gap1}} 
\label{sec:proofsmain} 
%
\paragraph{Establishing the rate on the feasibility gap \eqref{eq:Feas_unbounded}}
\label{sec:Feasibility}
To obtain a bound on the feasibility gap, we start with an Opial-like Lemma.
\begin{lemma}\label{lem:boundsequence}
   Let Assumptions \ref{ass:Mappings}-\ref{ass:ACcondition} hold. Let $z^{*}\in\scrS_{1}$ be arbitrary and let $(z^{k})_{k\geq1}$ and $(z^{k+1/2})_{k\geq1}$ be generated by Algorithm \ref{alg:Alg1} with $16 t^{2}_{k}L_{k}^{2}\leq 1$. Define $D_1=0$ and
    \begin{equation}
    \label{eq:W_k_def}
        W_{k}(z)\eqdef E_{k}(z)+D_{k}\qquad \forall k\geq 1. 
    \end{equation}
    Then, the following statements hold:
    \begin{enumerate}
        \item[(a)] $\lim_{k\to\infty}W_{k}(z^{*})$ exists in $\R$; \label{boundsequence_1}
        \item[(b)] $\displaystyle\lim_{k\to\infty}\norm{z^{k+1/2}-z^{k}}=\lim_{k\to\infty}\norm{z^{k+1}-z^{k+1/2}}=\lim_{k\to\infty}\norm{z^{k+1/2}-z^{k-1/2}}=0$; \label{boundsequence_2}
        \item[(c)] $(z^{k})_{k\geq 1}$ and $(z^{k+1/2})_{k\geq 1}$ \label{boundsequence_3} are bounded.
        \item[(d)] $\lim_{k\to\infty}\norm{z^{k}-z^{\ast}}$ and $\lim_{k\to\infty}\norm{z^{k+1/2}-z^{*}}$ both exist and are equal.
    \end{enumerate}
\end{lemma}
\begin{proof}
    Pick $z^{\ast}\in\scrS_{1}\subseteq\scrS_{2}$ so that there exists $p^{\ast}\in\NC_{\scrS_{2}}(z^{\ast})$ s.t. $-\opF_{1}(z^{\ast})-p^{\ast}\in\partial g_{1}(z^{\ast})$. Using the monotonicity of $\opF_{1}$ and the convex subgradient inequality for $g_{1}$, we obtain 
\begin{align*}
&-H^{(\opF_{1},g_{1})}(z^{\ast},z^{k+1/2})+\inner{p^{\ast},z^{k+1/2}-z^{\ast}}\\
&=\inner{\opF_{1}(z^{k+1/2}),z^{k+1/2}-z^{\ast}}+g_{1}(z^{k+1/2})-g_{1}(z^{\ast})+\inner{p^{\ast},z^{k+1/2}-z^{\ast}}\\
&\geq \inner{\opF_{1}(z^{\ast}),z^{k+1/2}-z^{\ast}}+g_{1}(z^{k+1/2})-g_{1}(z^{\ast})+\inner{p^{\ast},z^{k+1/2}-z^{\ast}}\geq 0.
\end{align*}
Combining with the energy inequality \eqref{eq:Energy} and the fact that 
\begin{align}
\Psi_k(z)=-\sigma_kH^{(\opF_{1},g_{1})}(z,z^{k+1/2})-H^{(\opF_{2},g_{2})}(z,z^{k+1/2}),\label{eq:Psi_H}
\end{align}
we continue
\begin{align}
&E_{k+1}(z^{\ast})+D_{k+1} \nonumber\\
&\leq E_{k+1}(z^{\ast})+D_{k+1}+t_{k}\sigma_{k}\left(-H^{(\opF_{1},g_{1})}(z^{\ast},z^{k+1/2})+\inner{p^{\ast},z^{k+1/2}-z^{\ast}}\right)\nonumber\\
&\leq E_{k}(z^{\ast})+D_{k} -\frac{1}{4}\norm{z^{k+1/2}-z^{k}}^{2}-\frac{1}{4}\norm{z^{k+1}-z^{k+1/2}}^{2}\nonumber\\
&+t_{k}\left(H^{(\opF_{2},g_{2})}(z^{\ast},z^{k+1/2})+\inner{\sigma_{k}p^{\ast},z^{k+1/2}-z^{\ast}}\right).\label{eq:energy2}
\end{align}
Since $p^{\ast}\in \NC_{\scrS_{2}}(z^{\ast})$, we know from \eqref{eq:support_equality} that $\inner{\sigma_kp^*,z^*}=\supp(\sigma_kp^*|\scrS_2)$ and 
\begin{align*}
&H^{(\opF_{2},g_{2})}(z^{\ast},z^{k+1/2})+\inner{\sigma_{k}p^{\ast},z^{k+1/2}-z^{\ast}}\\
&=\inner{\sigma_{k}p^{\ast},z^{k+1/2}}+H^{(\opF_{2},g_{2})}(z^{\ast},z^{k+1/2})-\supp(\sigma_{k}p^{\ast}\vert \scrS_{2})\\
&\leq \sup_{z\in\dom(g_{2})}\left(\inner{\sigma_{k}p^{\ast},z}+H^{(\opF_{2},g_{2})}(z^{\ast},z)\right)-\supp(\sigma_{k}p^{\ast}\vert\scrS_{2})\\
&\overset{\eqref{eq:phimaintext}}{=}\varphi^{(\opF_{2},g_{2})}(z^{\ast},\sigma_{k}p^{\ast})-\supp(\sigma_{k}p^{\ast}\vert \scrS_{2}) \leq\sup_{z\in\scrS_{2}}\varphi^{(\opF_{2},g_{2})}(z,\sigma_{k}p^{\ast})-\supp(\sigma_{k}p^{\ast}\vert \scrS_{2}) \eqdef c_{k}.
\end{align*}
We note that \eqref{eq:FPlower} in Appendix \ref{app:FP} guarantees that the upper bound is informative, in the sense that $c_{k}\geq 0$. Plugging this back into \eqref{eq:energy2} and using \eqref{eq:W_k_def}, we obtain
\[
W_{k+1}(z^{\ast}) \leq W_{k}(z^{\ast}) -\frac{1}{4}\norm{z^{k+1/2}-z^{k}}^{2}-\frac{1}{4}\norm{z^{k+1}-z^{k+1/2}}^{2}+t_{k}c_{k}.
\]
The summability condition \eqref{eq:AttCza} implies $\sum_{k}t_{k}c_{k}<\infty$. Hence, the non-negativity of $W_k(z^*)$ allows us to deduce from Lemma \ref{lem:RS} that $\lim_{k\to\infty}W_{k}(z^{\ast})=W_{\infty}(z^{\ast})$ exists, and $\lim_{k\rightarrow\infty} \norm{z^{k+1/2}-z^k}= \lim_{k\to\infty}\norm{z^{k+1}-z^{k+1/2}}=0$. By the triangle inequality, the latter also implies that $\lim_{k\to\infty}\norm{z^{k+1/2}-z^{k-1/2}}=0$. By the definition of $E_k(z^*)$, we further obtain that the sequences $(z^{k})_{k\geq1}$ and $(z^{k+1/2})_{k\geq1}$ are bounded.

By \eqref{eq:shortcut_notation_D}, $L_k$-Lipschitzness of $\opV_k$ and item (b), we have, as $k \to  \infty$, 
\begin{align*}
    0\leq D_{k+1}&=t_{k}^{2}\norm{\opV_{k}(z^{k-1/2})-\opV_{k}(z^{k+1/2})}^{2}\leq t_{k}^{2}L^{2}_{k}\norm{z^{k+1/2}-z^{k-1/2}}^{2} \to 0. 
\end{align*}
Since $E_k(z^*)=W_k(z^*)-D_k$, it thus follos $\lim_{k\rightarrow\infty}E_k(z^*)=W_{\infty}(z^{*}).$ Moreover, from $\norm{z^k-z^{k+1/2}}\to 0$, and  \eqref{eq:shortcut_notation_E}, we immediately deduce
\begin{align*}\lim_{k\rightarrow \infty}\norm{z^{k+1/2}-z^*}^2&=\lim_{k\rightarrow\infty}\norm{z^k-z^*}^2+\norm{z^k-z^{k+1/2}}^2+\inner{z^k-z^{k+1/2},z^k-z^*}\\
&=\lim_{k\rightarrow\infty}\norm{z^k-z^*}^2=2W_{\infty}(z^{*}).
\end{align*}
This proves (d).
\end{proof}

If Assumption \ref{ass:ACcondition}  holds, thanks to Lemma \ref{lem:boundsequence} there exists $R>0$ such that $z^{k},z^{k+1/2}\in\ball(z^{1},R)$ for all $k\geq 1$. 
If $\dom(g_{1})\cap\dom(g_{2})$ is compact, such a ball also exists by construction of the algorithm, since then the iterates are confined to stay in the compact set $\dom(g_{1})\cap\dom(g_{2}).$ We denote $\scrD_i\eqdef\dom(g_{i})$, $i=1,2$.

Consider a point $z\in \scrD_1\cap\scrD_2$. Invoking Assumption~\ref{ass:Mappings}, we have
\begin{align*}
    &\inner{\opF_{1}(z^{k+1/2}),z-z^{k+1/2}}  \leq    \norm{\opF_{1}(z^{k+1/2})}\cdot\norm{z-z^{k+1/2}} \nonumber\\
    &\leq \left(\norm{\opF_{1}(z^{1})}+\norm{\opF_{1}(z^{k+1/2})-\opF_{1}(z^{1})}\right)\cdot\left(\norm{z^1-z^{k+1/2}}+\norm{z^1}+\norm{z}\right)\nonumber\\
    &\leq   \left(\norm{\opF_{1}(z^{1})}+L_{\opF_{1}} R\right)\cdot(R+\norm{z^{1}}+\norm{z})\triangleq a(z),
\end{align*}
where the last inequality uses that $\opF_1$ is $L_{\opF_{1}}$-Lipschitz and $z^{k+1/2} \in\ball(z^{1},R)$ for all $k\geq 1$.
Moreover, defining the compact set $\scrB_{R}\eqdef\ball(z^{1},R)\cap\scrD_1\cap\scrD_2$, by Assumption~\ref{ass:BVg},
\begin{align*}
    g_{1}(z)-g_{1}(z^{k+1/2})&
    \leq \Var(g_{1}\vert\{z\}\times \scrB_{R})\triangleq b(z).
\end{align*}
Thus, defining $C_z\triangleq a(z)+b(z)$ we can conclude that
\begin{align}
H^{(\opF_{1},g_{1})}(z,z^{k+1/2})=\inner{\opF_{1}(z^{k+1/2}),z-z^{k+1/2}} + g_{1}(z)-g_{1}(z^{k+1/2}) \leq C_z. \label{eq:H_1_bound}
\end{align}
By \eqref{eq:Psi_H} and monotonicity of $\opF_2$ we have, for all $z \in \scrD_1\cap\scrD_2$  and $k\geq 1$,
\begin{align}
\Psi_{k}(z) + \sigma_{k}C_z
&\geq \Psi_{k}(z) + \sigma_{k} H^{(\opF_{1},g_{1})}(z,z^{k+1/2}) \nonumber\\
&= -H^{(\opF_{2},g_{2})}(z,z^{k+1/2})\geq H^{(\opF_{2},g_{2})}(z^{k+1/2},z). \label{eq:lower_level_proof_4}
\end{align}
Multiplying by $t_k$, using  \eqref{eq:W_k_def} and \eqref{eq:Energy},  where we now neglect the non-positive quadratic terms, we obtain, for all $z \in \scrD_1\cap\scrD_2$ and $k\geq 1$,
\begin{align}
t_kH^{(\opF_{2},g_{2})}(z^{k+1/2},z) &\leq 
t_k\Psi_{k}(z) + t_k\sigma_{k}C_{\scrU_2}  \leq W_{k}(z)-W_{k+1}(z)+ t_k\sigma_{k}C_z.
\label{eq:lower_level_proof_5}
\end{align}
Summing \eqref{eq:lower_level_proof_5} for $k=1,\ldots,K$ 
dividing by $T_K$, 
and applying $D_1=0$, yields 
\begin{equation}
\label{eq:sum_lower_level_1}
\sum_{k=1}^{K}\frac{t_k}{T_{K}}H^{(\opF_{2},g_{2})}(z^{k+1/2},z)\leq \frac{E_{1}(z)}{T_{K}}  + \frac{\sum_{k=1}^{K}t_{k} \sigma_{k}}{T_{K}} C_z \quad \forall z \in \scrD_1\cap\scrD_2. 
\end{equation}
Using the definition of the ergodic trajectory \eqref{eq:ergodic}, we can apply the Jensen inequality in order to finally arrive at 
\begin{align}\label{eq:lower_level_ergodic_bound_z}
    H^{(\opF_{2},g_{2})}(\bar{z}^{K},z)\leq \frac{E_{1}(z)}{T_{K}}  + \frac{\sum_{k=1}^{K}t_k \sigma_k}{T_{K}} C_z \qquad \forall z \in \dom(g_{1})\cap\dom(g_{2}).
\end{align}
Since $\scrU_2 \subset \dom(g_{1})\cap\dom(g_{2})$ is compact, we have that $b(z)\leq \Var(g_{1}\vert\scrU_{2}\times \scrB_{R})$ and $C_{\scrU_2}\eqdef \sup_{z\in\scrU_2}C_z$, as well as $\sup_{z\in\scrU_2} E_{1}(z)$, are well-defined. Thus, for any compact subset $\scrU_2\subset\dom(g_{1})\cap\dom(g_{2}) \subset\scrZ$ we can use the definition of the localized gap function \eqref{eq:gap_Def_new} specialized as the feasibility gap \eqref{eq:FeasGap}, to obtain \eqref{eq:Feas_unbounded}.





\paragraph{Proof of \eqref{eq:lower_level_rate_unbounded_detlta}}
Choosing constant step $t_{k}=t$ and penalty sequence as in \eqref{eq:sigma}, we easily see that $\frac{\sum_{k=1}^{K}t_{k} \sigma_{k}}{T_{K}}\leq \frac{a}{(1-\delta)(K+b)^{\delta}},$ so that \eqref{eq:Feas_unbounded} transforms into \eqref{eq:lower_level_rate_unbounded_detlta}.
\begin{remark}
Corollary \ref{cor:Polynomialsigma} is formulated for regularization sequences $(\sigma_{k})_{k\geq 1}$ of the form \eqref{eq:sigma} with $\delta\in(0,1)$. In the limiting case $\delta=1$ and constant step size $t_{k}=t$, the last term in \eqref{eq:Feas_unbounded} is upper bounded as $\frac{a\log(K+b)}{K}C_{\scrU_2}$.
Hence, even in this limiting case, not satisfying condition \eqref{eq:tsigma} required in Algorithm \ref{alg:Alg1}, the ergodic average $(\bar{z}^{k})_{k\geq1}$ asymptotically approaches the feasible set $\scrS_{2}$.\close
\end{remark}

\paragraph{Establishing the rate on the optimality gap \eqref{eq:Opt_unbounded}}
\label{sec:optimality}
Let $\scrU_{1}\subset\scrD_1\cap\scrD_2$ be a compact set s.t. $\scrU_{1}\cap\scrS_{1}\neq\emptyset$. 
By the monotonicity of $\opF_{1}$ and $\opF_{2}$ and choosing $z\in\scrS_2$, we see 
\begin{align*}
\Psi_{k}(z)&\overset{\eqref{eq:Psi_H}}{=} -\sigma_kH^{(\opF_{1},g_{1})}(z,z^{k+1/2})-H^{(\opF_{2},g_{2})}(z,z^{k+1/2}) \\ &\geq \sigma_kH^{(\opF_{1},g_{1})}(z^{k+1/2},z)+H^{(\opF_{2},g_{2})}(z^{k+1/2},z) \geq \sigma_kH^{(\opF_{1},g_{1})}(z^{k+1/2},z).
\end{align*}
Plugging this in \eqref{eq:Energy}, dropping the non-positive quadratic terms, and recalling \eqref{eq:W_k_def}, we arrive at 
$\sigma_{k}t_{k}H^{(\opF_{1},g_{1})}(z^{k+1/2},z)\leq W_{k}(z)-W_{k+1}(z)$ for all $z \in \scrS_2$.

Since by the $L_{k-1}$-Lipschitz property of $\opV_{k-1}$,
\begin{align*}
        W_{k}(z)
        &\leq \frac{1}{2}\norm{z^{k}-z}^{2}+t^{2}_{k-1}L^{2}_{k-1}\norm{z^{k-3/2}-z^{k-1/2}}^{2},
\end{align*}
we immediately see that, for any fixed $z\in \scrS_2$, $W_{k}(z)$ is bounded in the sense that for some positive constant  $\bar{\omega}(z)$, $W_{k}(z)\leq\bar{\omega}(z)$ for all $k\geq 1$.
Using this bound, $W_k(z)\geq 0$, and $\sigma_{k+1}\leq\sigma_{k}$, we obtain that, for any $z \in \scrS_2$, 
\begin{align*}
    \sum_{k=1}^{K}t_{k}H^{(\opF_{1},g_{1})}(z^{k+1/2},z)&\leq \frac{1}{\sigma_{1}}W_{1}(z)+W_{2}(z)(\frac{1}{\sigma_{2}}-\frac{1}{\sigma_{1}})+\ldots+W_{K}(z)(\frac{1}{\sigma_{K}}-\frac{1}{\sigma_{K-1}})\\
   & \leq \frac{\bar{\omega}(z)}{\sigma_{K}}.
\end{align*}
Dividing both sides by $T_{K}$ and applying the Jensen inequality, we arrive at 
\begin{equation}
\label{eq:upper_level_rate_unbounded_proof1}
      H^{(\opF_{1},g_{1})}(\bar{z}^{K},z)  \leq \frac{\bar{\omega}(z)}{T_{K}\sigma_{K}} \quad \forall z \in \scrS_2.
\end{equation}
Thus, for any compact subset $\scrU_1\subset\scrZ$ with $\scrU_{1}\cap\scrS_{2}\neq\emptyset$, we can use the definition of the localized gap function \eqref{eq:gap_Def_new} specialized as the optimality gap \eqref{eq:OptGap} to obtain $\Theta_{\rm Opt}(\bar{z}^{K}\vert\scrU_1 \cap \scrS_{2})\leq \frac{C_{\scrU_1}}{T_K\sigma_{K}}$,
where $C_{\scrU_{1}}\eqdef \sup_{z\in\scrU_{1}\cap\scrS_{2}}\bar{\omega}(z)$. Note that the latter is well-defined since $\scrS_{1}\subseteq\scrS_2$  and $\scrU_{1}\cap\scrS_{1}\neq\emptyset$ by the Theorem assumption. Thus, we have obtained the upper bound in \eqref{eq:Opt_unbounded}.
The lower bound in \eqref{eq:Opt_unbounded} holds since $\scrS_1 \subseteq \scrS_2$:
\[
-B_{\scrU_1}\dist(\bar{z}^{K},\scrS_{2})\stackrel{\eqref{eq:LB1}}{\leq}\Theta_{\rm Opt}(\bar{z}^{K}\vert\scrU_{1}\cap\scrS_{1}) \leq \Theta_{\rm Opt}(\bar{z}^{K}\vert\scrU_{1}\cap\scrS_{2}).
\]



\paragraph{Establishing the improved rate on the optimality gap \eqref{eq:complexitySharp}}
If additionally the lower level solution set enjoys $(\alpha,\rho)$-weak sharpness we have from \eqref{eq:WS} and \eqref{eq:Feas_unbounded}
\begin{align*}
\Theta_{\rm Opt}(\bar{z}^{K}\vert\scrU_{1}\cap\scrS_{2}&)\geq
-B_{\scrU_{1}}\dist(\bar{z}^{K},\scrS_{2})\\
&\geq -B_{\scrU_{1}}\left[\frac{\rho}{\alpha}\Theta_{\rm Feas}(\bar{z}^{K}\vert\scrU_{2}\cap\scrS_{2})\right]^{1/\rho} \geq -B_{\scrU_{1}}\left[\frac{\rho}{\alpha}\Theta_{\rm Feas}(\bar{z}^{K}\vert\scrU_{2})\right]^{1/\rho}\\
&\geq-B_{\scrU_{1}}\left[\frac{\sup_{z\in\scrU_{2}}\norm{z^{1}-z}^{2}}{2T_{K}(\alpha/\rho)}+ \frac{C_{\scrU_{2}}\sum_{k=1}^{K}t_{k}\sigma_{k}}{(\alpha/\rho)T_{K}}\right]^{1/\rho}.
\end{align*}
\paragraph{Proof of \eqref{eq:upper_level_rate_unbounded_detlta}}
Choosing the penalty sequence $(\sigma_{k})_{k\geq 1}$ according to \eqref{eq:sigma} and the step size $t_{k}=t$,  
the r.h.s. of \eqref{eq:Opt_unbounded} is upper bounded by the r.h.s. in \eqref{eq:upper_level_rate_unbounded_detlta} and the l.h.s. in \eqref{eq:upper_level_rate_unbounded_detlta} follows from \eqref{eq:lower_level_rate_unbounded_detlta}.
Note that if $\delta=1$ in the choice of the penalty sequence \eqref{eq:sigma}, we do not obtain convergence in terms of the gap function to $0$, but rather only an $O(1)$ upper bound and a $o(1)$ lower bound.

\paragraph{Proof of Theorem \ref{th:sequences}} 
(i) From the prox-inequality \eqref{eq:prox} applied to the point $z^{k+1/2}$ and divided by $t_k>0$, we have, for any $z\in \scrZ$, 
\begin{align}
t_k^{-1}\inner{z^{k+1/2}-z^{k},z^{k+1/2}-z}&\leq \inner{\opV_{k}(z^{k-1/2}),z-z^{k+1/2}}
+ G_{k}(z)-G_{k}(z^{k+1/2})\nonumber\\
&\hspace{-2em}\overset{\eqref{eq:VkGk_def},\eqref{eq:HFG_def}}{=}\sigma_{k}H^{(\opF_{1},g_{1})}(z,z^{k+1/2})+H^{(\opF_{2},g_{2})}(z,z^{k+1/2})+e_{k} \nonumber \\
&\hspace{-2em} \leq   \sigma_{k}H^{(\opF_{1},g_{1})}(z,z^{k+1/2})-H^{(\opF_{2},g_{2})}(z^{k+1/2},z)+ e_{k},\label{eq:asympt_conv_proof_1}
\end{align}
where $e_{k}\eqdef \inner{\opV_{k}(z^{k-1/2})-\opV_{k}(z^{k+1/2}),z-z^{k+1/2}}$. Since $\opV_{k}$ is $L_k$-Lipschitz, by Lemma \ref{lem:boundsequence} (b) and (c), we have 
$\abs{e_{k}}\leq L_k\norm{z^{k-1/2}-z^{k+1/2}}\cdot \norm{z-z^{k+1/2}} \to 0$ as $k \to \infty$. Again, by Lemma \ref{lem:boundsequence} (b) and (c), and since $t_k>0$, we have $t_k^{-1}\inner{z^{k+1/2}-z^{k},z^{k+1/2}-z}\to 0$ as $k \to \infty$. 
By Lemma \ref{lem:boundsequence} (c), the sequence  $(z^{k+1/2})_{k\geq1}$ is bounded. Hence, we can find a weak accumulation point $\tilde{z}$. Let $\tilde{z}$ be such that $z^{k_j+1/2}\wlim \tilde{z}$ as $j\to \infty$. By monotonicity of $\opF_{1}$, lower semi-continuity of $g_{1}$, we get, for any $z \in \dom(g_1)$,  
\begin{align*}
    &\lim_{j\to\infty}H^{(\opF_{1},g_{1})}(z,z^{k_j+1/2}) = \lim_{j\to\infty} \inner{\opF_{1}(z^{k_j+1/2}),z-z^{k_j+1/2}}+g_{1}(z)-g_{1}(z^{k_j+1/2})\\
    &\leq \lim_{j\to\infty}\inner{\opF_{1}(z),z-z^{k_j+1/2}}+g_{1}(z)-g_{1}(z^{k_j+1/2}) \leq \inner{\opF_{1}(z),z-\tilde{z}}+g_{1}(z)-g_{1}(\tilde{z}). 
\end{align*}
Thus, since $\sigma_k\to0$ as $k\to \infty$, we have for any $z \in \dom(g_2)\subseteq \dom(g_1)$, that $\lim_{j\to\infty}\sigma_{k_j}H^{(\opF_{1},g_{1})}(z,z^{k_j+1/2})\leq 0.$ 
Thus, we obtained the limits of all the terms in \eqref{eq:asympt_conv_proof_1} except the term with $H^{(\opF_{2},g_{2})}$. Using these limits when passing in \eqref{eq:asympt_conv_proof_1} to the subsequence indexed by $k_j$ and then to the limit $j\to \infty$, using the lower semicontinuity of $g_2$, we obtain, for any $z \in \dom(g_2)$ 
\begin{align*}
    0 &\leq \lim_{j\to\infty}-H^{(\opF_{2},g_{2})}(z^{k_j+1/2},z)
    = \lim_{j\to\infty} \inner{\opF_{2}(z),z-z^{k_j+1/2}}+g_{2}(z)-g_{2}(z^{k_j+1/2})\\
    &\leq \inner{\opF_{2}(z),z-\tilde{z}}+g_{2}(z)-g_{2}(\tilde{z}). 
\end{align*}
For $z \notin \dom(g_2)$ the above inequality holds trivially. Thus, it holds for all $z \in \scrZ$. This shows that any weak limit point of $(z^{k+1/2})_{k\geq1}$ is a Minty solution of $\HVI(\opF_{2},g_{2})$. Under Assumption \ref{ass:Mappings}, Minty solutions and Stampacchia solutions of $\HVI(\opF_{2},g_{2})$ \eqref{eq:VI_general} coincide (see also the proof of Lemma \ref{lem:gap} in Appendix \ref{app:proofs}), showing that $\tilde{z}\in\scrS_{2}$.

(ii) By Lemma \ref{lem:boundsequence} (c), the sequence  $(z^{k+1/2})_{k\geq1}$ is bounded. Hence the ergodic average sequence $(\bar{z}^{k})_{k\geq1}$ is bounded as well and thus admits weakly converging subsequences. Let $\tilde{z}$ be such that $\bar{z}^{k_j}\wlim \tilde{z}$ as $j\to \infty$.
For any reference point $z\in \dom (g_1) \cap \dom (g_2)=\dom (g_2)$ applying condition \eqref{eq:tsigma} to \eqref{eq:lower_level_ergodic_bound_z}, we obtain that $\lim_{j\rightarrow \infty} H^{(\opF_{2},g_{2})}(\bar{z}^{k_j},z)\leq 0$. Thus, using the lower semi-continuity of $g_2$, $\tilde{z}$ satisfies $H^{(\opF_{2},g_{2})}(\tilde{z},z)\leq 0$. This inequality trivially holds for $z\notin \dom (g_2)$, and hence holds for all $z \in \scrZ$. Thus, $\tilde{z}$ is a Minty solution of $\HVI(\opF_{2},g_{2})$, which again, under Assumption  \ref{ass:Mappings} is also a Stampacchia solution of $\HVI(\opF_{2},g_{2})$. This implies $\tilde{z}\in \scrS_2$. 

For any point $z\in\scrS_{2}$, \eqref{eq:upper_level_rate_unbounded_proof1} and \eqref{eq:tsigma} yield that
$\lim_{j\rightarrow \infty} H^{(\opF_{1},g_{1})}(\bar{z}^{k_j},z) \leq 0.$ 
This, by the lower semi-continuity of $g_{1}$, implies 
\begin{align*}
    \inner{\opF_{1}(z),\tilde{z}-z}+&g_{1}(\tilde{z})-g_{1}(z) \leq \lim_{j\to\infty}\left(\inner{\opF_{1}(z),\bar{z}^{k_j}-z}+g_{1}(\bar{z}^{k_j})-g_{1}(z)\right)\leq 0. 
\end{align*} 
Since $z\in\scrS_{2}$ is arbitrary and $\tilde{z}\in \scrS_2$, we have that $\tilde{z}$ is a Minty solution of \eqref{eq:P}, which is, under Assumption  \ref{ass:Mappings}, also a Stampacchia solution of $\HVI(\opF_{1},g_{1})$, i.e., $\tilde{z}\in\scrS_{1}$. 

Lemma \ref{lem:boundsequence} (d) shows that the limit of $(\norm{z^{k+1/2}-z})_{k\geq1}$ exists for all $z\in\scrS_{1}$. As just demonstrated, all weak accumulation points of $(\bar{z}^{k})_{k\geq1}$ are points in $\scrS_{1}$. Lemma \ref{lem:Opial} thus shows that $\bar{z}^{k}\wlim z_{\infty}$ for some $z_{\infty}\in\scrS_{1}$.

\section{Extensions} 
\label{sec:extensions}
%

In this section, first we develop estimates for the case in which the upper-level $\HVI$ is strongly monotone, implying that the entire problem \eqref{eq:P} has a unique solution. Second, we show how our method can be extended in a straightforward manner to the forward-backward-forward splitting method of \cite{Tsen00}. 
%
%
%
\subsection{Improved rates under strong monotonicity assumption} 
\label{sec:linear}
On top of the already formulated hypothesis, we impose the following:
\begin{assumption}\label{ass:SM}
The operator $\opF_{1}:\scrZ\to\scrZ$ is $\mu$-strongly monotone.
\end{assumption}
Since we assume that the whole hierarchical equilibrium problem \eqref{eq:P} has a nonempty feasible set, Assumption \ref{ass:SM} implies that $\scrS_{1}=\{z^{*}\}$ for some $z^{*}\in\scrS_{2}$. We again establish rates on gap functions in terms of the suitably adapted ergodic average 
\begin{equation}\label{eq:Ergodic_SM}
\bar{z}^K=\frac{\sum_{i=1}^Kt_{i}\sigma_{i}\gamma_iz^{i+1/2}}{\sum_{i=1}^Kt_{i}\sigma_{i}\gamma_i},\text{ where }\gamma_k\eqdef \frac{1}{\prod_{i=1}^k(1-t_{i}\sigma_{i}\mu)},\gamma_0\eqdef 1. 
\end{equation}

\begin{theorem}\label{thm:ComplexitySM}
Consider problem \eqref{eq:P}. Let Assumptions \ref{ass:Mappings}, \ref{ass:CQ}, \ref{ass:BVg}, \ref{ass:SM} hold.
Let additionally either Assumption \ref{ass:ACcondition} hold, or $\dom(g_{1})\cap\dom(g_{2})$ be compact.
Let $(\bar{z}^{k})_{k\geq1}$ as in \eqref{eq:Ergodic_SM} be generated by Algorithm \ref{alg:Alg1} with
\begin{equation}\label{eq:StepSM}
8t^{2}_{k}L^{2}_{k}+2t_{k}\sigma_{k}\mu\leq 1, \quad k\geq 1.
\end{equation}
Let $\scrU_{2}\subset\dom(g_{1})\cap\dom(g_{2})$ be a nonempty compact set. Then, there exists a constant $\tilde{C}_{\scrU_{2}}>0$ for which 
\begin{equation}
    \label{eq:Feas_SM}
    \Theta_{\rm Feas}(\bar{z}^{K}\vert\scrU_2)\leq \frac{\sigma_1\sup_{z\in\scrU_2}\norm{z^{1}-z}^{2}}{2\sum_{i=1}^{K}t_{i}\sigma_{i}\gamma_{i}}+\tilde{C}_{\scrU_{2}} \frac{\sum_{i=1}^{k}t_i \sigma^{2}_i \gamma_{i}}{\sum_{i=1}^{K}t_{i}\sigma_{i}\gamma_{i}}.
\end{equation}
Additionally, for any nonempty compact subset $\scrU_{1}\subset\dom(g_{1})\cap\dom(g_{2})$ with $\scrU_{1}\cap\scrS_{1}\neq\emptyset$, there exists a constant $B_{\scrU_{1}}>0$ such that  
\begin{equation}\label{eq:Opt_unbounded_SM}
-B_{\scrU_{1}}\dist(\bar{z}^{K},\scrS_{2})\leq \Theta_{\rm Opt}(\bar{z}^{K}\vert\scrU_{1}\cap\scrS_{2})\leq \frac{ \sup_{z\in\scrU_{1}\cap\scrS_{2}}\norm{z^{1}-z}^{2}}{2\sum_{i=1}^{K}t_{i}\sigma_{i}\gamma_{i}}. 
\end{equation}
If additionally the lower level solution set $\scrS_{2}$ is $(\alpha,\rho)$-weakly sharp and $\scrU_2$ is s.t. $\scrU_2\cap \scrS_2\ne \emptyset$, then for the same set $\scrU_{1}$
\begin{equation}\label{eq:complexitySharp_SM}\begin{split}
-B_{\scrU_{1}}&\left[\frac{\sigma_1\sup_{z\in\scrU_{2}}\norm{z^{1}-z}^{2}}{2(\alpha/\rho)\sum_{i=1}^{K}t_{i}\sigma_{i}\gamma_{i}}+ \frac{\tilde{C}_{\scrU_{2}}\sum_{i=1}^{k}t_i \sigma^{2}_i \gamma_{i}}{(\alpha/\rho)\sum_{i=1}^{K}t_{i}\sigma_{i}\gamma_{i}}\right]^{\frac{1}{\rho}}\leq  \Theta_{\rm Opt}(\bar{z}^{K}\vert \scrU_{1}\cap\scrS_{2}).
\end{split}\end{equation}
\end{theorem}
\begin{proof}
We start with a refined version of the recursion \eqref{eq:Energy}.
Recall the  bound \eqref{eq:bound1} that implies for any $z \in \scrZ$ and $k\geq 1$:
\begin{align}
E_{k+1}(z)&\leq E_{k}(z)-\frac{1}{2}\norm{z^{k+1/2}-z^{k}}^{2}+D_{k+1} \notag\\
&+t_{k}\inner{\opV_{k}(z^{k+1/2}),z-z^{k+1/2}}+t_{k}\left(G_{k}(z)-G_{k}(z^{k+1/2})\right). \label{eq:SM_proof_1}
\end{align}
For the second and third terms in the r.h.s. we have from \eqref{eq:important_recursion_1}
\begin{align}
D_{k+1}-\frac{1}{2}\norm{z^{k+1/2}-z^{k}}^{2} &\leq 
4t_{k}^{2}L^{2}_{k}\norm{z^{k}-z^{k-1/2}}^{2}-D_{k+1}\nonumber\\
&+\left(4t_{k}^{2}L^{2}_{k}-\frac{1}{2}\right)\norm{z^{k+1/2}-z^{k}}^{2}.\label{eq:SM_proof_2}
\end{align}
Acting in the same way as for the proof of \eqref{eq:per_iter_proof_1}, but with the new step size assumption \eqref{eq:StepSM} that implies $4t_k^2L_k^2\leq 1-t_{k}\sigma_{k}\mu$,
we further obtain
\begin{align}
4t^{2}_{k}L^{2}_{k}\norm{z^{k}-z^{k-1/2}}^{2}&\leq 4t^{2}_{k}L_{k}^{2}t_{k-1}^{2}\norm{\opV_{k-1}(z^{k-1/2})-\opV_{k-1}(z^{k-3/2})}^{2}\nonumber\\
&\leq (1-t_{k}\sigma_{k}\mu)D_k. \label{eq:SM_proof_3}
\end{align}
Combining \eqref{eq:SM_proof_2}, \eqref{eq:SM_proof_3}, we obtain
\begin{align}
D_{k+1}-\frac{1}{2}\norm{z^{k+1/2}-z^{k}}^{2} &\leq 
(1-t_{k}\sigma_{k}\mu)D_k-D_{k+1}  +\left(4t_{k}^{2}L^{2}_{k}-\frac{1}{2}\right)\norm{z^{k+1/2}-z^{k}}^{2}.\label{eq:SM_proof_4}
\end{align}
By the strong monotonicity of $\opF_1$, we obtain 
\begin{align}
    \inner{\opV_{k}(z^{k+1/2}),z^{k+1/2}-z}&=\inner{\opF_{2}(z^{k+1/2}),z^{k+1/2}-z}+\sigma_{k}\inner{\opF_{1}(z^{k+1/2}),z^{k+1/2}-z}\nonumber\\
    & \hspace{-5em}\geq \inner{\opF_{2}(z),z^{k+1/2}-z}+\sigma_{k}\inner{\opF_{1}(z),z^{k+1/2}-z}+\sigma_{k}\mu\norm{z^{k+1/2}-z}^{2}\nonumber\\
    &\hspace{-5em}\geq \inner{\opF_{2}(z),z^{k+1/2}-z}+\sigma_{k}\inner{\opF_{1}(z),z^{k+1/2}-z}\nonumber\\
    &\hspace{-4em}+\frac{\sigma_{k}\mu}{2}\norm{z^{k}-z}^{2} -\sigma_{k}\mu\norm{z^{k+1/2}-z^{k}}^{2},
    \label{eq:by_str_monotonicity}
\end{align}
where the last inequality follows from the triangle inequality and $(a+b)^2/2\leq a^2+b^2 \Leftrightarrow a^2 \geq (a+b)^2/2 - b^2$. 
Plugging \eqref{eq:SM_proof_4} and \eqref{eq:by_str_monotonicity} multiplied by $-1$ into \eqref{eq:SM_proof_1}, recalling $G_k=g_2+\sigma_k g_1$, we obtain
\begin{align}
E_{k+1}(z)&+D_{k+1}\nonumber\\
&\leq (1-t_{k}\sigma_{k}\mu)(E_{k}(z)+D_{k})+(4t^{2}_{k}L^{2}_{k}+t_{k}\mu\sigma_{k}-\frac{1}{2})\norm{z^{k+1/2}-z^{k}}^{2} \nonumber\\
&-t_{k}(\inner{\opF_{2}(z),z^{k+1/2}-z}+g_{2}(z^{k+1/2})-g_{2}(z))\nonumber\\
&-t_{k}\sigma_{k}(\inner{\opF_{1}(z),z^{k+1/2}-z}+g_{1}(z^{k+1/2})-g_{1}(z))\nonumber\\
&\leq (1-t_{k}\sigma_{k}\mu)(E_{k}(z)+D_{k})\nonumber\\
&-t_k \left( H^{(\opF_{2},g_{2})}(z^{k+1/2},z)+\sigma_{k}H^{(\opF_{1},g_{1})}(z^{k+1/2},z)\right), 
\label{eq:main_recursion_sc}
\end{align} 
where the last inequality uses \eqref{eq:StepSM} and \eqref{eq:HFG_def}. We now use this recursion to establish rates on the feasibility and optimality gap, starting with the feasibility gap. 

Since the analysis for the monotone setting holds also for strongly monotone case, we have that either Assumption \ref{ass:ACcondition} holds and implies by Lemma \ref{lem:boundsequence} that the sequences generated by the algorithm are bounded or $\dom(g_{1})\cap\dom(g_{2})$ is compact.
Thus, in any case, for a compact subset $\scrU_{2}\subset\dom(g_{1})\cap\dom(g_{2})$, there exists a constant $\tilde{C}_{\scrU_{2}}$ such that, for all $z \in \scrU_2, k\geq 1$, (cf. the derivation of \eqref{eq:H_1_bound})
\begin{equation*}
-H^{(\opF_{1},g_{1})}(z^{k+1/2},z)=\inner{\opF_{1}(z),z-z^{k+1/2}}+g_{1}(z)-g_1(z^{k+1/2})\leq \tilde{C}_{\scrU_2}.
\end{equation*}
Combining this with \eqref{eq:main_recursion_sc} and recalling the notation \eqref{eq:W_k_def}, we get 
\begin{align*}
t_{k} H^{(\opF_{2},g_{2})}(z^{k+1/2},z) &\leq  \left(1-t_k\sigma_k\mu\right)W_k(z)  - W_{k+1}(z) + t_{k}\sigma_{k} \widetilde{C}_{\scrU_2}.
\end{align*}
Multiplying both sides by $\sigma_k\gamma_k$ (see \eqref{eq:Ergodic_SM} for the definition of $\gamma_k$) and using that $\sigma_k\geq \sigma_{k+1}$ and $W_k(z)\geq 0$ for $k\geq 1$, we obtain
\begin{align*}
t_{k} \sigma_k\gamma_kH^{(\opF_{2},g_{2})}(z^{k+1/2},z) &\leq  \sigma_k\gamma_{k-1}W_k(z)  - \sigma_{k+1}\gamma_kW_{k+1}(z) + t_{k}\sigma_{k}^2\gamma_k \widetilde{C}_{\scrU_2}.
\end{align*}
Telescoping these inequalities and using the convexity of $g_2$ and $D_1=0$, $\gamma_0=1$, for the ergodic average \eqref{eq:Ergodic_SM}, we obtain 
\begin{align*}
 H^{(\opF_{2},g_{2})}(\bar{z}^{K},z)   &\leq   \frac{\sigma_1 E_1(z)+\widetilde{C}_{\scrU_2} \sum_{i=1}^Kt_{i}\sigma_{i}^2\gamma_i}{\sum_{i=1}^Kt_{i}\sigma_{i}\gamma_i}.
\end{align*}
Taking the supremum over $z \in \scrU_{2}$ on both sides and using the definition of the feasibility gap \eqref{eq:FeasGap}, we obtain the upper bound \eqref{eq:Feas_SM}.

To obtain a bound in terms of the optimality gap, consider an arbitrary reference point $z\in\scrS_2$ and rearrange \eqref{eq:main_recursion_sc} to arrive at
\begin{align*}
t_{k}\sigma_{k}H^{(\opF_{1},g_{1})}(z^{k+1/2},z) &\leq  \left(1-t_k\sigma_k\mu\right)(E_k(z)+D_k) - (E_{k+1}(z)+D_{k+1}).
\end{align*}
Multiplying the previous inequality by $\gamma_k$  yields
\begin{align*}
t_{k}\sigma_{k}\gamma_kH^{(\opF_{1},g_{1})}(z^{k+1/2},z) &\leq  \gamma_{k-1}(E_k(z)+D_k) - \gamma_k(E_{k+1}(z)+D_{k+1}).
\end{align*}
Telescoping these inequalities, using the convexity of $g_1$, $D_1=0$, $\gamma_0=1$, for the ergodic average \eqref{eq:Ergodic_SM}, we obtain 
\begin{align*}
 H^{(\opF_{1},g_{1})}(\bar{z}^{K},z) &\leq   \frac{E_1(z)}{\sum_{i=1}^Kt_{i}\sigma_{i}\gamma_i}.
\end{align*}
Since $\scrU_{1}\subset\dom(g_{1})\cap\dom(g_{2})$ is a nonempty compact set with $\scrU_{1}\cap\scrS_{1}\neq\emptyset$, we have that $\scrU_{1}\cap\scrS_{2}\neq\emptyset$. Taking supremum over $\scrU_{1}\cap\scrS_{2}$, we arrive at the upper bound in \eqref{eq:Opt_unbounded_SM}.
The lower bound in \eqref{eq:Opt_unbounded_SM} is the same as in  \eqref{eq:Opt_unbounded}. 

If additionally the lower level solution set enjoys $(\alpha,\rho)$-weak sharpness we obtain \eqref{eq:complexitySharp_SM} by combining \eqref{eq:Opt_unbounded_SM}, \eqref{eq:WS}, and \eqref{eq:Feas_SM}.
\end{proof}
To have a better understanding of the obtained rates, we now make a particular choice of the sequences $(t_{k})_{k\geq 1}$, $(\sigma_{k})_{k\geq 1}$.
\begin{corollary}\label{cor:tsigmaSM} 
Let the same Assumptions as in Theorem \ref{thm:ComplexitySM} be in place. Assume that the regularization sequences $(t_{k})_{k\geq 1}$ and $(\sigma_{k})_{k\geq 1}$ are chosen as 
\begin{equation}
    t_k=\frac{1}{4(L_k+\sigma_k\mu)}=\frac{1}{4(L_{\opF_2}+\sigma_k(L_{\opF_1}+\mu))}, \quad \sigma_k=\frac{4L_{\opF_2}}{\mu k}, \quad k\geq 1.
\end{equation}
Then, we have
\begin{align}
    \label{eq:lower_level_rate_unbounded_detlta_SM}
      &\Theta_{\rm Feas}(\bar{z}^{K}\vert\scrU_2)\leq \frac{8L_{\opF_2}(L_{\opF_1}+\mu)\sup_{z\in\scrU_2}\norm{z^{1}-z}^{2}}{\mu K}+\tilde{C}_{\scrU_{2}} \frac{4L_{\opF_2}(1+\ln K) }{ \mu K },\\
        \label{eq:upper_level_rate_unbounded_detlta_SM}
 -B_{\scrU_{1}}&\left[\frac{8L_{\opF_2}(L_{\opF_1}+\mu)\sup_{z\in\scrU_2}\norm{z^{1}-z}^{2}}{\mu K (\alpha/\rho)}+\tilde{C}_{\scrU_{2}} \frac{4L_{\opF_2}(1+\ln K) }{ \mu K (\alpha/\rho)} \right]^{1/\rho}\\
&\overset{(*)}{\leq}   \Theta_{\rm Opt}(\bar{z}^{K}\vert\scrU_1\cap\scrS_{2})\leq \frac{ 2(L_{\opF_1}+\mu)\sup_{z\in\scrU_{1}\cap\scrS_{2}}\norm{z^{1}-z}^{2}}{K},\nonumber
\end{align}
where the inequality $(*)$ holds under the $(\alpha,\rho)$-weak sharpness assumption.
\end{corollary}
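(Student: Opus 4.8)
The plan is to reduce everything to the generic estimates \eqref{eq:Feas_SM}, \eqref{eq:Opt_unbounded_SM} and \eqref{eq:complexitySharp_SM} already furnished by Theorem \ref{thm:ComplexitySM}: the only genuine work is to check that the prescribed sequences meet the step-size condition \eqref{eq:StepSM}, and then to evaluate in closed form the two weighted sums $\sum_{i=1}^K t_i\sigma_i\gamma_i$ and $\sum_{i=1}^K t_i\sigma_i^2\gamma_i$ that appear in those estimates.

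First I would verify \eqref{eq:StepSM}. By construction $t_k=\tfrac{1}{4(L_k+\sigma_k\mu)}$, so $4t_k(L_k+\sigma_k\mu)=1$, whence $2t_kL_k\le\tfrac12$ and $2t_k\sigma_k\mu\le\tfrac12$; therefore
\[
4t_k^2L_k^2+2t_k\sigma_k\mu=(2t_kL_k)^2+2t_k\sigma_k\mu\le\tfrac14+\tfrac12<1,
\]
which also entails $8t_k^2L_k^2\le\tfrac12\le 1$, so all hypotheses of the lemmas in Section \ref{sec:energy} remain in force.

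Next I would make the key algebraic observation. Substituting $\sigma_k=\tfrac{4L_{\opF_2}}{\mu k}$ into $t_k=\tfrac{1}{4(L_{\opF_2}+\sigma_k(L_{\opF_1}+\mu))}$ and simplifying gives, with $c\eqdef\tfrac{4(L_{\opF_1}+\mu)}{\mu}$,
\[
t_k\sigma_k\mu=\frac{1}{k+c},\qquad t_k\sigma_k=\frac{1}{\mu(k+c)}.
\]
Hence $1-t_k\sigma_k\mu=\tfrac{k+c-1}{k+c}$ and the product defining the weights telescopes, $\gamma_k=\prod_{i=1}^k\tfrac{i+c}{i+c-1}=\tfrac{k+c}{c}$. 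The crucial consequence is that the averaging weight is constant: $t_k\sigma_k\gamma_k=\tfrac{1}{\mu(k+c)}\cdot\tfrac{k+c}{c}=\tfrac{1}{\mu c}=\tfrac{1}{4(L_{\opF_1}+\mu)}$, so $\sum_{i=1}^K t_i\sigma_i\gamma_i=\tfrac{K}{4(L_{\opF_1}+\mu)}$; and $t_k\sigma_k^2\gamma_k=\sigma_k\cdot\tfrac{1}{4(L_{\opF_1}+\mu)}=\tfrac{L_{\opF_2}}{\mu k(L_{\opF_1}+\mu)}$, so by $\sum_{i=1}^K\tfrac1i\le 1+\ln K$ we get $\sum_{i=1}^K t_i\sigma_i^2\gamma_i\le\tfrac{L_{\opF_2}(1+\ln K)}{\mu(L_{\opF_1}+\mu)}$.

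Finally I would plug these values, together with $\sigma_1=\tfrac{4L_{\opF_2}}{\mu}$, into \eqref{eq:Feas_SM}, \eqref{eq:Opt_unbounded_SM} and \eqref{eq:complexitySharp_SM}. For the feasibility gap the leading term becomes $\tfrac{(4L_{\opF_2}/\mu)\sup_{z\in\scrU_2}\norm{z^1-z}^2}{2K/(4(L_{\opF_1}+\mu))}=\tfrac{8L_{\opF_2}(L_{\opF_1}+\mu)}{\mu K}\sup_{z\in\scrU_2}\norm{z^1-z}^2$ and the second term $\tilde C_{\scrU_2}\tfrac{4L_{\opF_2}(1+\ln K)}{\mu K}$, giving \eqref{eq:lower_level_rate_unbounded_detlta_SM}; for the optimality gap the upper bound in \eqref{eq:Opt_unbounded_SM} becomes $\tfrac{\sup_{z\in\scrU_1\cap\scrS_2}\norm{z^1-z}^2}{2K/(4(L_{\opF_1}+\mu))}=\tfrac{2(L_{\opF_1}+\mu)}{K}\sup_{z\in\scrU_1\cap\scrS_2}\norm{z^1-z}^2$, and the weak-sharpness lower bound follows by feeding the feasibility estimate just obtained through \eqref{eq:WS} and $\Theta_{\rm Opt}(\bar z^K\vert\scrU_1\cap\scrS_2)\ge -B_{\scrU_1}\dist(\bar z^K,\scrS_2)$, producing the left-hand side of \eqref{eq:upper_level_rate_unbounded_detlta_SM}. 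I do not expect any real obstacle here; the single point deserving a little care is the simplification of $t_k\sigma_k\mu$ and the recognition that $t_k\sigma_k\gamma_k$ is constant in $k$ — this is precisely the cancellation that makes the weighted ergodic average \eqref{eq:Ergodic_SM} tractable and yields the clean $O(1/K)$ rate up to the $\ln K$ factor in the feasibility bound.
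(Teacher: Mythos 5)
Your proposal is correct and follows essentially the same route as the paper's own proof: verify the step-size condition \eqref{eq:StepSM}, observe that $t_k\sigma_k\mu=\frac{1}{k+\varkappa}$ so that the product defining $\gamma_k$ telescopes and $t_k\sigma_k\gamma_k$ is constant, evaluate the two weighted sums (with the same harmonic bound $1+\ln K$), and substitute into \eqref{eq:Feas_SM}, \eqref{eq:Opt_unbounded_SM} and the weak-sharpness estimate. The only differences are cosmetic (your $c$ is the paper's $\varkappa$, and your step-size check factors $4t_k^2L_k^2=(2t_kL_k)^2$ rather than expanding the denominator).
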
 
\begin{proof}
First, it is easy to check that \eqref{eq:StepSM} is indeed satisfied.
Further, $t_i\sigma_i=\frac{1}{L_{\opF_2}+\frac{4L_{\opF_2}}{\mu i}(L_{\opF_1}+\mu)}\cdot\frac{L_{\opF_2}}{\mu i}=\frac{1}{i+\kappa}\cdot\frac{1}{\mu }$,
where we denoted $\kappa=\frac{4}{\mu }(L_{\opF_1}+\mu)$. 
This gives us
\begin{align*}
\prod_{i=1}^k(1-t_i\sigma_i\mu)=
\prod_{i=1}^k \left(\frac{i+\kappa-1}{i+\kappa} \right)= \frac{\kappa}{k+\kappa}
\end{align*}
and thus $\gamma_k=1/\prod_{i=1}^k(1-t_i\sigma_i\mu)=\frac{k+\kappa}{\kappa}$.
Finally, we have
\begin{align*}
\sum_{i=1}^Kt_{i}\sigma_{i}\gamma_i &= \sum_{i=1}^K \frac{1}{i+\kappa}\cdot\frac{1}{\mu } \cdot \frac{i+\kappa}{\kappa} = \frac{K}{\mu\kappa}=\frac{K}{4(L_{\opF_1}+\mu)},\\
\sum_{i=1}^Kt_{i}\sigma_{i}^2\gamma_i &= \sum_{i=1}^K \frac{1}{i+\kappa}\cdot\frac{1}{\mu } \cdot \frac{4L_{\opF_2}}{\mu i} \cdot \frac{i+\kappa}{\kappa} = \frac{L_{\opF_2}\sum_{i=1}^K \frac{1}{i}}{\mu(L_{\opF_1}+\mu)}  \leq \frac{L_{\opF_2}(1+\ln K)}{\mu(L_{\opF_1}+\mu)}.
\end{align*}
Substituting this into \eqref{eq:Feas_SM}, we obtain  \eqref{eq:lower_level_rate_unbounded_detlta_SM}. Substituting this into \eqref{eq:Opt_unbounded_SM} and \eqref{eq:complexitySharp_SM}, we obtain \eqref{eq:upper_level_rate_unbounded_detlta_SM}.
\end{proof}

\subsection{Tseng splitting}
Optimistic extragradient (also known as Popov's or past extragradient) method saves one evaluation of the time-varying operator $\opV_{k}$ per iteration compared to the standard extragradient method. Yet, it still requires two evaluations of the proximal operator of $G_{k}$ per iteration. An attractive modified extragradient scheme, which uses past values of the operator but saves on one evaluation of the proximal operator, is the following optimistic version of Tseng's Forward-Backward-Forward method \cite{Tsen00,tran2024revisiting} 
\begin{align*}
z^{k+1/2}&=\prox_{t_{k}G_{k}}(z^{k}-t_{k}\opV_{k}(z^{k-1/2})),\\ 
z^{k+1}&=z^{k+1/2}-t_k(\opV_{k}(z^{k+1/2})-\opV_{k}(z^{k-1/2})).
\end{align*}
By construction, there exits a $\xi^{k}\in\partial G_k(z^{k+1/2})$ such that
\begin{align*}
  z^{k+1/2}=z^k-t_k(\opV_{k}(z^{k-1/2})+\xi^k),\text{ and } 
  z^{k+1}=
  z^k-t_k(\opV_k(z^{k+1/2})+\xi^k).
\end{align*}
Eq. \eqref{eq:zk2} gives 
\begin{align}
    \norm{z^{k+1}-z}^2
    &=\norm{z^k-z}^2-\norm{z^{k+1}-z^k}^2+2\inner{z^{k+1}-z^k,z^{k+1}-z}\nonumber\\
    &\hspace{-5em}=\norm{z^k-z}^2-\norm{z^{k+1}-z^k}^2 +\inner{z^k-t_k(\opV_k(z^{k+1/2})+\xi^k)-z^k,z^{k+1}-z}\nonumber\\
    &\hspace{-5em}=\norm{z^k-z}^2-\norm{z^{k+1}-z^k}^2-2t_k\inner{\opV_k(z^{k+1/2})+\xi^k,z^{k+1}-z^{k+1/2}}\nonumber\\
    &\hspace{-3em}-2t_k\inner{\opV_k(z^{k+1/2})+\xi^k,z^{k+1/2}-z}.\label{FBFeq1}
\end{align}
Now observe that $t_k(\opV_k(z^{k+1/2})+\xi^k)=z^k-z^{k+1}=z^k-z^{k+1/2}+t_k(\opV_k(z^{k+1/2})-\opV_k(z^{k-1/2})).$ As a consequence, we get 
\begin{align*}
   &2t_k\inner{\opV_k(z^{k+1/2})+\xi^k,z^{k+1}-z^{k+1/2}}=2\inner{z^k-z^{k+1/2},z^{k+1}-z^{k+1/2}}\\
   &+2t_k\inner{\opV_k(z^{k+1/2})-\opV_k(z^{k-1/2}),z^{k+1}-z^{k+1/2}}\\
   &=\norm{z^k-z^{k+1/2}}^2+\norm{z^{k+1}-z^{k+1/2}}^2-\norm{z^{k+1}-z^k}^2\\
   &+2t_k\inner{\opV_k(z^{k+1/2})-\opV_k(z^{k-1/2}),z^{k+1}-z^{k+1/2}}.
\end{align*}
Combining this estimate with \eqref{FBFeq1} gives
\begin{align*}
  &\norm{z^{k+1}-z}^2=\norm{z^k-z}^2-\norm{z^k-z^{k+1/2}}^2-\norm{z^{k+1}-z^{k+1/2}}^2\\
  &+ 2t_k\inner{\opV_k(z^{k-1/2})-\opV_k(z^{k+1/2}),z^{k+1}-z^{k+1/2}}  -2t_k\inner{\opV_{k}(z^{k+1/2})+\xi_k,z^{k+1/2}-z}.
\end{align*}
Since $\xi^k\in\partial G_k(z^{k+1/2})$ the subgradient inequality allows us to continue with
\begin{align*}
    \norm{z^{k+1}-z}^{2}&\leq \norm{z^{k}-z}^{2}-\norm{z^{k+1/2}-z^{k}}^{2}-\norm{z^{k+1}-z^{k+1/2}}^{2}\\
    &-2t_{k}\Psi_{k}(z)-2t_{k}\inner{\opV_{k}(z^{k+1/2})-\opV_{k}(z^{k-1/2}),z^{k+1}-z^{k+1/2}}.
\end{align*}
where we have used the notation introduced in \eqref{eq:shortcut_notation_Psi}. This is exactly \eqref{eq:energy_important_111}. 
Repeating the same steps as in the rest of Section \ref{sec:energy}, we obtain the same energy bound \eqref{eq:Energy}. Using this energy bound and repeating the same arguments as in Section \ref{sec:AllProofs}, we obtain the same results as reported in Section \ref{sec:mainresults} for the optimistic extragradient method.

\section{Numerical Experiments}
\label{sec:numerics}
%
We verify the performance of the optimistic extragradient method with two numerical examples. 

\subsection{Hierarchical Nash equilibria}
To provide a simple illustration on the practical performance of our method, we've tested Algorithm \ref{alg:Alg1} on a simple version of the hierarchical equilibrium problem described in Section \ref{sec:HierarchyNE}, taken from \cite{LamSagSIOPT25}. We've chosen a deliberately simple example for which the equilibrium set of the game can be computed explicitly and we can monitor the evolution of the inexactness of the algorithm. Let us consider $N= 4$ lower-level players and $M= 2$ upper-level players, with $x^1 = (y^2,y^4), x^2 = (y^1,y^3)$. The player-cost functions are 
\begin{align*}
&h^{\ell}_{1}(y^{1},y^{-1})= 0.5(y^1)^2 + y^1(y^2 +2y^3+y^4-100),\;\varphi^{\ell}_{1}(y^{1})=\iota_{[-100,50]}(y^{1}),\\
&h^{\ell}_{2}(y^{2},y^{-2})=0.5(y^{2})^{2}+y^{2}(y^{1}+y^{3}+y^{4}-50),\\
&\varphi^{\ell}_{2}(y^{2})=\max\{-10(y^{2}-15),0\}+\iota_{[0,50]}(y^{2}),\\
&h^{\ell}_{3}(y^{3},y^{-3})=0.5(y^{3})^{2}+y^{3}(y^{2}+y^{4}-100),\;\varphi^{\ell}_{3}(y^{3})=\iota_{[0,100]}(y^{3})\\
&h^{\ell}_{4}(y^{4},y^{-4})=0.5(y^{4})^{2}+y^{4}(y^{1}+y^{2}+y^{2}+y^{3}-50),\;\varphi^{\ell}_{4}(y^{4})=\iota_{[0,50]}(y^{4}).
\end{align*}
For the upper-level players, we assume 
\begin{align*}
    &h^{u}_{1}(x_{1},x_{2})=(y^{2}-20)^{2}+(y^{4}-50)^{2}+(y^{2}+y^{4})(y^{1}+y^{3}),\varphi^{u}_{1}(x^{1})=0,\\
    &h^{u}_{2}(x_{1},x_{2})=(y^{1})^{2}+y^{1}(y^{2}+y^{3})+(y^{3})^{2}+y^{3}(y^{2}+y^{4}),\varphi^{u}_{2}(x^{2})=0.
\end{align*}
One can obtain an explicit expression for the lower-level equilibrium set: $\scrS_{2}=\{(-50,y^2,50,50 - y^2) : 15 \leq y^2 \leq 50\}$. Thus, the unique variational equilibrium of this game is $(-50,15,50,35)$. We have solved this game problem with Algorithm \ref{alg:Alg1} using the regularization sequence $\sigma_{k}=\frac{1}{(k+3)^{1/2}}$ and constant step size $t_k=t$. Figure \ref{fig:GNE} displays the temporal evolution of the strategies of the four players, as well as the distance to the unique variational equilibrium.

\begin{figure}[t]
    \centering
    \includegraphics[width=0.4\linewidth]{./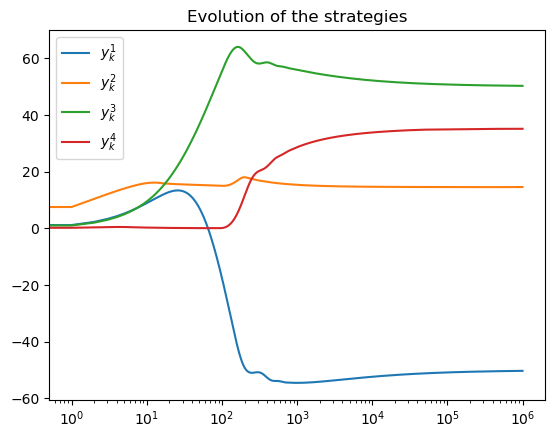}    \includegraphics[width=0.4\linewidth]{./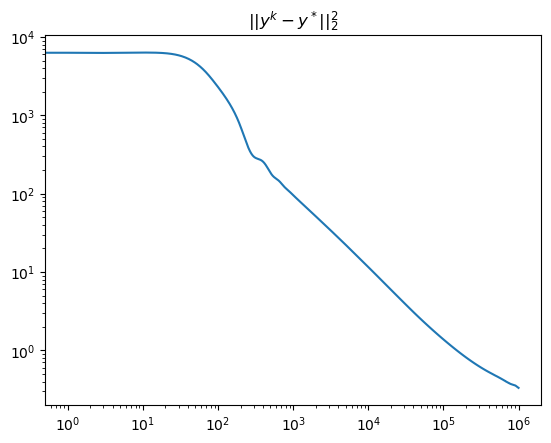}
    \caption{Evolution of the strategy profile and error plot for the hierarchical NEP.}
    \label{fig:GNE}
\end{figure}

\subsection{Constrained Min-Max problem}
We consider min-max problems with joint linear constraints, 
\begin{equation}
\min_{x}\max_{y}\scrL(x,y) \qquad \text{s.t. } (x,y)\in\argmin_{(x',y') \in \scrX\times\scrY}\frac{1}{2}\norm{\bA x'+\bB y'-c}^{2}_{\scrZ}
\end{equation}
where $\bA:\scrX\to\scrZ$ and $\bB:\scrY\to\scrZ$ are bounded linear operators. The saddle function $\scrL:\scrZ\eqdef \scrX\times\scrY\to(-\infty,\infty]$ is of the form 
$$
\scrL(x,y)=\varphi(x)+f_{1}(x)+\inner{x,\bK y}-f_{2}(y)-\psi(y), 
$$
where $\varphi\in\Gamma_{0}(\scrX),\psi\in\Gamma_{0}(\scrY)$, $\bK\in\text{Lin}(\scrY,\scrX)$ is a bounded linear coupling term, and $f_{1}:\scrX\to\R,f_{2}:\scrY\to\R$ are Fr\'{e}chet differentiable convex functions with Lipschitz continuous gradients. We can identify this problem with the hierarchical $\HVI$ defined in \eqref{eq:P}, via the data 
\begin{align*}
&\opF_{1}(x,y)=\begin{pmatrix} \nabla f_{1}(x)\\ \nabla f_{2}(y)\end{pmatrix}+\begin{pmatrix} \bK y\\-\bK^{\ast}x\end{pmatrix},\; \opF_{2}(x,y)=\begin{pmatrix} \bA^{\ast}(\bA x+\bB y-c )\\ \bB^{\ast}(\bA x+\bB y-c) \end{pmatrix},\\
&\partial g_{1}(x,y)=\partial\varphi(x)\times\partial\psi(y),\; g_{2}(x,y)=0. 
\end{align*}
Using the product space structure $z\eqdef (x,y)\in\scrZ$, the optimistic extragradient method can be applied to solve this family of equilibrium problems. 
\paragraph{Generalized Absolute value equations (GAVEs)} 
The GAVEs \cite{mangasarian2006absolute} is an important non-smooth NP-hard problem in the form of 
\begin{equation}\label{eq:Gave}
Ax+B\abs{x}=b, 
\end{equation}
where $A\in\R^{m\times n},b\in\R^{m}$, and for $x=(x_{1},\ldots,x_{n})^{\top}\in\R^{n}$ the coordinate-wise absolute value is defined as $\abs{x}=(\abs{x_{1}},\ldots,\abs{x_{n}})^{\top}$. It has been shown in \cite{dai2024optimality} that solving \eqref{eq:Gave} is equivalent to solving the linearly constrained convex-concave min-max problem 
\begin{align*}
&\min_{x\in\R^{n}_{\geq 0}}\max_{(y,w)\in\R^{m}\times\R^{n}_{\geq 0}}(b-(A+B)x)^{\top}y  \qquad
\text{s.t.: }  x-(B-A)^{\top}y-w=0.
\end{align*}
We approach this problem via the relaxed formulation
\begin{align*}
 & \min_{x\in\R^{n}_{\geq 0}}\max_{(y,w)\in\R^{m}\times\R^{n}_{\geq 0}}(b-(A+B)x)^{\top}y \\
\text{s.t.: } &(x,y,w)\in\argmin_{(x',y',w')\in\R^{n}_{\geq0}\times\R^{m}\times\R^{n}_{\geq 0}}\frac{1}{2}\norm{x'-(B-A)^{\top}y'-w'}^{2}_{2}.
\end{align*}
For our numerical investigations, we've implemented the following example from \cite{Ebadi24}. Consider the boundary value problem 
\begin{align*}
&-u''(x)+\abs{u(x)}=f(x)\quad 0\leq x\leq 1,\\ 
&u(0)=-1, u(1)=0,
\end{align*}
where $f(x)=x^{2}-1$. This second-order ODE has the exact solution 
\begin{equation*}
u(x)=0.1961\sin(x)-4\cos(x)-x^2+3\quad 0\leq x\leq 1.
\end{equation*}
Let $h=\frac{1}{n+1}$ be the mesh size generating the grid $x_{0}=0,x_{i}=ih,1\leq i\leq n,x_{n+1}=1$, we obtain the discretized equation 
\begin{equation*}
-u''(x_{i})+\abs{u(x_{i})}=f(x_{i})\quad 1\leq i\leq n.
\end{equation*}

\begin{figure}[t]
    \centering
    \includegraphics[width=0.5\linewidth]{./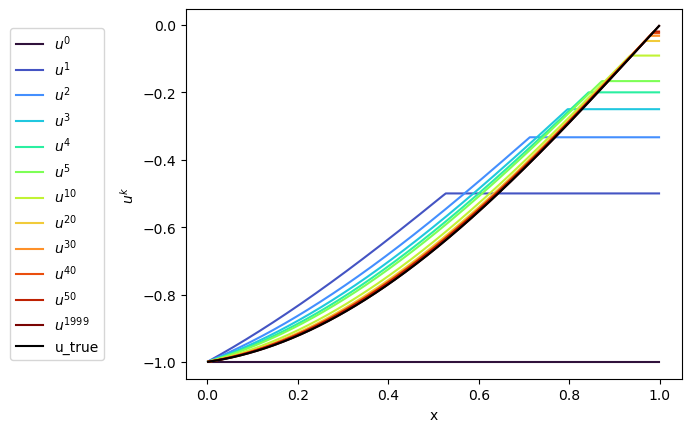}
     \includegraphics[width=0.4\linewidth]{./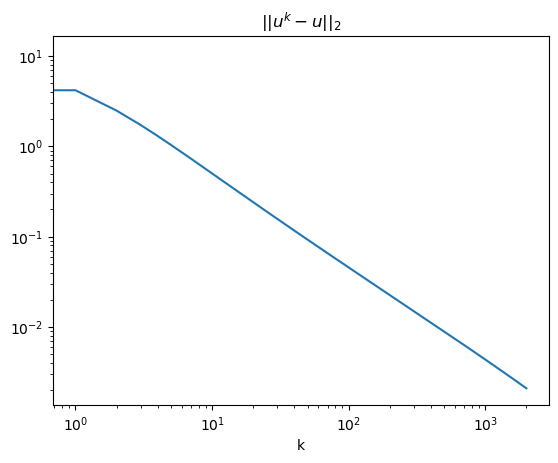}
    \caption{Evolution of the approximate solution $u^k$ after completion of the $k$-th iteration of the Algorithm (left), and the evolution of the residual relative to the true solution (right). }
    \label{fig:GAVE_error}
\end{figure}
Using the five-point central difference formula for $u''(x_{i})$, we obtain a problem of the form \eqref{eq:Gave} with 
{\footnotesize
\begin{equation*}
A=\frac{1}{12h^{2}} \begin{pmatrix} 20 & -6 & -4 & 1  & & & & & \\ 
-16 & 30 & -16 & 1  & & & & & \\
1 & -16 & 30 & -16 & 1& & & &  \\
 & \ddots & \ddots & \ddots & & & & & \\ 
  & & & & 1 & -16 & 30 & -16 & 1 \\
  & & & &  & 1 & -16 & 30 & -16 \\
    & & & &  & 1 & -4 & -6 & 20 \\
 \end{pmatrix},\;
b=\begin{pmatrix} 
f(x_{1})+\frac{11u_{0}}{12h^{2}}\\
f(x_{2})-\frac{u_{0}}{12h^{2}}\\
f(x_{3})\\ 
\vdots \\ 
f(x_{n-2})\\
f(x_{n-1})-\frac{u_{n-1}}{12h^{2}}\\
f(x_{n})+\frac{11u_{n+1}}{12h^{2}}
\end{pmatrix}.
\end{equation*}
}
We've reconstructed the solution $u(x)$ using Algorithm \ref{alg:Alg1} with regularization sequence $\sigma_{k}=\frac{1}{(k+3)^{1/2}}$ and constant step size $t_k=t$. Figure \ref{fig:GAVE_error} displays the evolution of approximate solutions obtained with our method measured at the indicated snapshots of the algorithm. 
%

\section{Conclusion}
\label{sec:conclusion}
%

This paper proves several results on the convergence rates for solving hierarchical HVI's. Constructing a suitably defined optimistic extragradient method, we advance the state of the art along several dimensions. In particular, we derive convergence rates in terms of suitably defined feasibility and optimality gaps. Our analysis reveals close connections to geometric conditions imposed on the operator defining the lower level equilibrium problems, conditions which have so-far only been used in the context of hierarchical optimization. We believe that our proof strategy is going to be useful for several other problems currently under investigation in the literature. In particular, it will be very interesting to derive rates for the stochastic formulation of our model template and derive rates in terms of expected gap functions. Further interesting directions for future research include the consideration of accelerated methods. We leave these questions as important extensions to future research.

\paragraph{Acknowledgements}
This research benefited from the support of the FMJH Program PGMO. MST's research is supported by the Deutsche Forschungsgemeinschaft (DFG) - Projektnummer 556222748 "non-stationary hierarchical minimization". PD's research is supported by the Deutsche Forschungsgemeinschaft (DFG, German Research Foundation) under Germany's Excellence Strategy – The Berlin Mathematics Research Center MATH+ (EXC-2046/1, project ID: 390685689). 

We thank Johannes-Carl Schnebel for his help on conducting the numerical experiments, and Enis Chenchene, Radu I. Bot, as well as David A. Hulett for fruitful discussions on this topic. 

\begin{appendix}
\section*{Appendix}
%

\section{Auxiliary facts}
\label{app:aux} 

\begin{lemma}[Lemma 5.31, \cite{BauCom16}]
\label{lem:RS}
Let $(a_{k})_{k\geq1},(b_{k})_{k\geq1},(c_{k})_{k\geq1}$ be nonnegative sequences such that $\sum_{k\geq 1}c_{k}<\infty$ and 
$
a_{k+1}\leq a_k-b_{k}+c_{k}.
$ 
Then, $\lim_{k\to\infty}a_{k}$ exists and $\sum_{k\geq 1}b_{k}<\infty$.
\end{lemma}
Let $(x_{n})_{n\geq1}$ be a sequence in $\scrZ$ and $(\lambda_{n})_{n\geq1}$ a sequence of positive numbers such that $\sum_{n\geq1}\lambda_{n}=+\infty$. Define the sequence of weighted averages $z_{n}=\frac{1}{\tau_{n}}\sum_{k=1}^{n}\lambda_{k}x_{k}$, where $\tau_{n}\eqdef\sum_{k=1}^{n}\lambda_{k}$.
The next fact is Lemma 2.3 in \cite{Attouch:2010aa}.
\begin{lemma}[Opial-Pasty]
\label{lem:Opial}
Let $\scrS$ be a nonempty subset of $\scrZ$ and assume that $\lim_{n\to\infty}\norm{x_{n}-x}$ exists for every $x\in\scrS$. If every weak cluster point of $(x_{n})_{n\geq1}$ (respectively $(z_{n})_{n\geq1}$) lies in $\scrS$, then $(x_{n})_{n\geq1}$ (respectively $(z_{n})_{n\geq1}$) converges weakly to an element of $\scrS$ as $n\to\infty$. 
\end{lemma}
\section{The Fitzpatrick function}\label{app:FP}
Let $\opM:\scrZ\to 2^{\scrZ}$ be a maximally monotone operator. The \emph{Fitzpatrick function} \cite{fitzpatrick1988representing,BauCom16} $\scrF_{\opM}:\scrZ\to(-\infty,+\infty]$,  associated with the operator $\opM$, is defined as 
\begin{equation*}
\scrF_{\opM}(x,u)\eqdef \sup_{(y,v)\in\gr(\opM)}\{\inner{x,v}+\inner{y,u}-\inner{y,v}\}.
\end{equation*}
Following \cite{Borwein:2016aa}, we define the gap function $\gap_{\opM}(x)\eqdef \scrF_{\opM}(x,0)$. $\gap_{\opM}$ is convex, and in fact the smallest translation invariant gap function associated with the monotone operator $\opM$ \cite[Theorem 3.1]{Borwein:2016aa}. Importantly, this gives the properties $\gap_{\opM}(x)\geq 0$ and $\gap_{\opM}(x)=0$ if and only if $x\in\zer(\opM)$. To make this concept concrete, observe that if $\opM=\opF+\NC_{\scrC}$, then the above definition of the gap function reduces to the well-known Auslender dual gap function \cite{FacPan03,auslender1976optimisation}
\begin{equation*}
\gap_{\opF+\NC_{\scrC}}(x)=\sup_{y\in\scrC}\inner{\opF(y),x-y}.
\end{equation*}
If $\opM=\opF+\partial g$ for a function $g\in\Gamma_{0}(\scrZ)$, we easily obtain 
\begin{equation*}
\gap_{\opF+\partial g}(x)\leq\sup_{y\in\dom(g)}\inner{\opF(y),x-y}+g(x)-g(y). 
\end{equation*}
Recall the definition \eqref{eq:HFG_def} of the bifunction $H^{(\opF,g)}$.
If $\opF:\scrZ\to\scrZ$ is monotone and continuous on $\dom(g)$, it is easy to verify that $H^{(\opF,g)}(x,y)\leq -H^{(\opF,g)}(y,x)$ for all $(x,y)\in \dom(g)\times\dom(g)$ (i.e. $H^{(\opF,g)}$ is a monotone bifunction \cite{iusem2011maximal}). In the structured setting $\HVI(\opF,g)$, we obtain the following bounds on the Fitzpatrick function, showing its close connection to the bifunction $H^{(\opF,g)}$ and the Auslender dual gap function. First, the convex subgradient inequality yields the relation
\begin{align*}
\scrF_{\opF+\partial g}(x,u)&=\sup_{y\in\dom(\partial g),\xi\in\partial g(y)}\{\inner{x-y,\opF(y)}+\inner{\xi,x-y}+\inner{y,u}\}\\
&\leq \sup_{y\in\dom(g)}\{\inner{\opF(y),x-y}+g(x)-g(y)+\inner{y,u}\}\\
&=\sup_{y\in\dom(g)} \{H^{(\opF,g)}(x,y)+\inner{y,u}\} \eqdef\varphi^{(\opF,g)}(x,u).
\end{align*}
The function $\varphi^{(\opF,g)}(x,u)$ is thus seen as the Fitzpatrick transform of $H^{(\opF,g)}$. Second, since the function $x\mapsto H^{(\opF,g)}(x,y)$ is convex, we see 
\begin{equation}\label{eq:dualphi}
\varphi^{(\opF,g)}(x,u)\leq\sup_{y\in\dom(g)}\{\inner{y,u}-H^{(\opF,g)}(y,x)\}=\left(H^{(\opF,g)}(\bullet,x)\right)^{\ast}(u). 
    \end{equation}
In particular, 
\begin{align}
&\varphi^{(\opF,g)}(x,u)\geq\inner{x,u} \qquad\forall x\in\dom(g), \label{eq:FPlower}\\ 
&\scrF_{\opF+\partial g}(x,0)\leq \sup_{y\in\dom(g)}H^{(\opF,g)}(x,y)=\varphi^{(\opF,g)}(x,0). \label{eq:FPgap}
\end{align}
Following this notation, we define the restricted dual gap function as in \eqref{eq:gap_Def_new}.

\section{Omitted proofs}\label{app:proofs}

\begin{proof}[Proof of Lemma \ref{lem:gap}]
The convexity of $x\mapsto \Theta(x\vert\opF,g,\scrC)$ follows from the fact that it is a supremum of convex functions. Since $\scrC$ is compact, this function is well-defined.

The lower bound $\Theta(x\vert\opF,g,\scrC)\geq 0$ is clear for $x\in\scrC$. Let's assume that $x\in\scrC$ is a solution to \eqref{eq:VI_general}. Then, by the monotonicity of $\opF$ we have 
\[
\inner{\opF(y),y-x}+g(y)-g(x) \geq \inner{\opF(x),y-x}+g(y)-g(x)\geq 0\qquad\forall y\in\scrZ.
\]
Hence,
$\Theta(x\vert\opF,g,\scrC)=\sup_{y\in\scrC}\{\inner{\opF(y),x-y}+g(x)-g(y)\}\leq 0$.
This, together with already proved inequality $\Theta(x\vert\opF,g,\scrC)\geq 0$ implies $\Theta(x\vert\opF,g,\scrC)= 0$.

Now assume $\Theta(x\vert\opF,g,\scrC)=0$ for $x\in\scrC$. Then, 
$\inner{\opF(x'),x-x'}+g(x)-g(x')\leq 0$ for all $ x'\in\scrC.$ 
Equivalently, $\inner{\opF(x'),x'-x}+g(x')-g(x)\geq 0 $ for all $ x'\in\scrC.$
This means $x\in\scrC$ is a Minty (weak) solution to the  $\HVI(\opF,g)$ over the set $\scrC$. Let $w\in\scrC$ be arbitrary and consider $v=tw+(1-t)x$ for $t\in[0,1]$. Since $\scrC$ is convex, we have $v\in\scrC$. It follows 
\begin{align*}
0&\leq \inner{\opF(v),v-x}+g(v)-g(x)\\
&=\inner{\opF(x+t(w-x)),t(w-x)}+g(tw+(1-t)x)-g(x)\\
&\leq t\inner{\opF(x+t(w-x)),w-x}+t(g(w)-g(x)) 
\end{align*}
Dividing both sides by $t$ and then letting $t\to 0^{+}$, the weak continuity of $\opF$ implies 
\begin{equation}
\label{eq:lem:gap_proof_1}
    0\leq \inner{\opF(x),w-x}+g(w)-g(x).
\end{equation}
Since $w\in\scrC$ has been chosen arbitrarily, we see that $x$ is a Stampacchia (strong) solution of the hemivariational inequality with the additional constraint $x\in\scrC$. 

We now show that if $\scrC\subset \dom(g)$, and there exists $\epsilon>0$ such that $\scrU\triangleq \ball(x,\epsilon)\cap \scrC=\ball(x,\epsilon)\cap \dom(g)$, then \eqref{eq:lem:gap_proof_1} holds also for $w \in \dom(g)$. To that end, assume to the contrary, there exits $z\in \dom(g)$ for which $\inner{\opF(x),z-x}+g(z)-g(x)<0.$
Then, by convexity of $\dom(g)$ and definition of $\scrU$ there exists $\lambda>0$ small enough for which $w=x+\lambda(z-x)\in \scrU$ and by convexity
\begin{equation*}
0\leq\inner{\opF(x),w-x}+g(w)-g(x)\leq \lambda(\inner{F(x),z-x}+g(z)-g(x))<0,
\end{equation*}
thus arriving at a contradiction. Hence, $\inner{F(x),z-x}+g(z)-g(x) \geq 0$ for any $z \in \dom(g)$ and moreover for any $z \in \scrZ$, which finishes the proof that $x$ solves \eqref{eq:VI_general}.
\end{proof}

\begin{proof}[Proof of Proposition \ref{prop:EB1}]
Pick $z\in\dom(g)$. Without loss of generality, assume $z\notin\scrS$. Let $\bar{z}=\Pi_{\scrS}(z)$, so that there exists $p\in\NC_{\scrS}(\bar{z})$ satisfying $p=z-\bar{z}$. Hence, $p^{\ast}\eqdef \frac{z-\bar{z}}{\norm{z-\bar{z}}}$ is a unit norm element of $\NC_{\scrS}(\bar{z})$ satisfying $\inner{p^{\ast},z-\bar{z}}
=\norm{z-\bar{z}}.$
By definition of the convex tangent cone, we have $\frac{z-\bar{z}}{\norm{z-\bar{z}}}\in\TC_{\dom(g)}(\bar{z})$, and consequently, $p^{\ast}\in \TC_{\dom(g)}(\bar{z})\cap\NC_{\scrS}(\bar{z})$. Additionally $p^{\ast}\in\ball(0,1)$. By definition of weak sharpness, there exists $\xi^{\ast}\in\partial g(\bar{z})$ and $\tau >0$ such that 
\begin{equation*}
\tau p^{\ast}-\opF(\bar{z})-\xi^{\ast}\in[\TC_{\dom(g)}(\bar{z})\cap\NC_{\scrS}(\bar{z})]^{\circ}.
\end{equation*}
Hence, 
\begin{align*}
&\tau\inner{p^{\ast},z-\bar{z}}\leq\inner{\opF(\bar{z})+\xi^{\ast},z-\bar{z}} \leq \inner{\opF(\bar{z}),z-\bar{z}}+g(z)-g(\bar{z})\\
&\implies\tau\norm{z-\bar{z}}\leq H^{(\opF,g)}(z,\bar{z})\\
&\implies \tau\dist(z,\scrS)\leq H^{(\opF,g)}(z,\bar{z})\leq \sup_{y\in\dom(g)}H^{(\opF,g)}(z,y)=\Theta(z\vert\opF,g,\dom(g)).
\end{align*}
\end{proof}

\begin{proof}[Proof of Lemma \ref{lem:LB}]
Let $z^{*}\in\scrS_{1}=\zer(\opF_{1}+\partial g_{1}+\NC_{\scrS_{2}})\subset\scrS_{2}$. Then, there exists $p^{*}\in\NC_{\scrS_{2}}(z^{*})$ such that $-\opF_{1}(z^{*})-p^{*}\in\partial g_{1}(z^{*})$. By the convex sugradient inequality, this implies 
\begin{equation}\label{eq:b1}
\inner{\opF_{1}(z^{*}),z-z^{*}}+g_{1}(z)-g_{1}(z^{*})\geq \inner{-p^{*},z-z^{*}}\qquad\forall z\in\scrZ.
\end{equation}
Take $z\in \scrZ$ and let $\hat{z}=\Pi_{\scrS_{2}}(z)$. Thus, $\inner{p^{*},\hat{z}-z^*}\leq 0$,  resulting in $\inner{p^{*},\hat{z}-z}\leq\inner{p^{*},z^{*}-z}, $
which implies when combined with \eqref{eq:b1} 
\begin{align*}
\inner{\opF_{1}(z^{*}),z-z^{*}}+&g_{1}(z)-g_{1}(z^{*})\geq \inner{p^{*},\hat{z}-z}\geq -\norm{p^{*}}\cdot\norm{\hat{z}-z}=-\norm{p^{*}}\dist(z,\scrS_{2}).
\end{align*}
Hence, for all compact $\scrU_1\subset\dom(g_{1})$ with $z^*\in\scrU_1\cap\scrS_{1}\neq\emptyset$ and corresponding $p^{*}\in\NC_{\scrS_{2}}(z^{*})$, we conclude 
$\Theta_{\rm Opt}(z\vert\scrU_1\cap\scrS_1)\geq-B_{\scrU_1}\dist(z,\scrS_{2})$, where $B_{\scrU_1}\eqdef\norm{p^{*}}$. 

To show \eqref{eq:WS}, we can directly use Definition \ref{def:WS}, to conclude
\begin{align*}
\Theta_{\rm Feas}(z\vert\scrU_2\cap\scrS_2)=\sup_{z^{*}\in\scrU_2\cap\scrS_2}H^{(\opF_{2},g_{2})}(z,z^{*})\geq \frac{\alpha}{\rho}\dist(z,\scrS_{2})^{\rho}.
\end{align*}
 \end{proof}

\end{appendix}

\bibliographystyle{plain}
\bibliography{PenaltyDynamics}

 \end{document}